\documentclass[11pt,reqno]{amsart}
\usepackage{amssymb,mathrsfs,graphicx}
\usepackage{epsfig,subfigure}
\usepackage{ifthen}
\usepackage{colortbl}
\definecolor{black}{rgb}{0.0, 0.0, 0.0}
\definecolor{red}{rgb}{1.0, 0.5, 0.5}

\usepackage{ifthen} 
           %

\provideboolean{shownotes} 
\setboolean{shownotes}{true} 
%
\newcommand{\margnote}[1]{
\ifthenelse{\boolean{shownotes}}%
{\marginpar{\raggedright\tiny\texttt{#1}}}%
{}%
}
\newcommand{\hole}[1]{
\ifthenelse{\boolean{shownotes}}%
{\begin{center} \fbox{ \rule {.25cm}{0cm} \rule[-.1cm]{0cm}{.4cm}
\parbox{.85\textwidth}{\begin{center} \texttt{#1}\end{center}} \rule
{.25cm}{0cm}}\end{center}} {} }

\topmargin-0.1in \textwidth6.in \textheight8.5in \oddsidemargin0in
\evensidemargin0in

\title[ ]{Contractivity of the Wasserstein metric for the kinetic Kuramoto equation}

\author[Carrillo]{Jos\'{e} A. Carrillo}
\address[Jos\'{e} A. Carrillo]{\newline Department of Mathematics
    \newline Imperial College London, London SW7 2AZ, United Kingdom}
\email{carrillo@imperial.ac.uk}

\author[Choi]{Young-Pil Choi}
\address[Young-Pil Choi]{\newline Department of Mathematics
    \newline Imperial College London, London SW7 2AZ, United Kingdom}
\email{young-pil.choi@imperial.ac.uk}

\author[Ha]{Seung-Yeal Ha}
\address[Seung-Yeal Ha]{\newline Department of Mathematical Sciences and Research Institute of Mathematics
    \newline Seoul National University, Seoul 151-747, Korea}
\email{syha@snu.ac.kr}

\author[Kang]{Moon-Jin Kang}
\address[Moon-Jin Kang]{\newline Department of Mathematical Sciences
    \newline Seoul National University, Seoul 151-747, Korea}
\email{hiofte@snu.ac.kr}

\author[Kim]{Yongduck Kim}
\address[Yongduck Kim]{\newline Department of Mathematical Sciences
    \newline Seoul National University, Seoul 151-747, Korea}
\email{dikky5@snu.ac.kr}

\numberwithin{equation}{section}

\newtheorem{theorem}{Theorem}[section]
\newtheorem{lemma}{Lemma}[section]

\newtheorem{proposition}{Proposition}[section]
\newtheorem{remark}{Remark}[section]
\newtheorem{definition}{Definition}[section]

\newcommand{\bbr}{\mathbb R}
\newcommand{\bbs}{\mathbb S}
\newcommand{\bbt}{\mathbb T}
\newcommand{\bbp} {\mathbb P}
\newcommand{\bbz} {\mathbb Z}

\def\charf {\mbox{{\text 1}\kern-.30em {\text l}}}








\begin{document}
\allowdisplaybreaks

\date{\today}

\subjclass{92D25,74A25,76N10}

\keywords{Kuramoto model, phase, complete
synchronization, Wasserstein distance, contraction}

\thanks{\textbf{Acknowledgments.}
JAC acknowledges partial support by MINECO MTM2011-27739-C04-02,
by GRC 2009 SGR 345 from Generalitat de Catalunya, and by the
Engineering and Physical Sciences Research Council grant number
EP/K008404/1. JAC also acknowledges support from the Royal Society
through a Wolfson Research Merit Award. YPC was supported by Basic
Science Research Program through the National Research Foundation
of Korea (NRF) funded by the Ministry of Education, Science and
Technology (2012R1A6A3A03039496). The work of SYHA is supported by
NRF grant (2011-0015388).}

\begin{abstract}
We present synchronization and contractivity estimates for the
kinetic Kuramoto model obtained from the Kuramoto phase model in
the mean-field limit. For identical Kuramoto oscillators, we
present an admissible class of initial data leading to
time-asymptotic complete synchronization, that is, all measure
valued solutions converge to the traveling Dirac measure
concentrated on the initial averaged phase. If two initial Radon
measures have the same natural frequency density function and strength of coupling, we show
that the Wasserstein $p$-distance between corresponding measure
valued solutions is exponentially decreasing in time. This
contraction principle is more general than previous
$L^1$-contraction properties of the Kuramoto phase model.
\end{abstract}

\maketitle \centerline{\date}


%
%
\section{Introduction}
\setcounter{equation}{0} The objective of this paper is to present
a contraction property of the kinetic Kuramoto equation in the
Wasserstein metric. The synchronization phenomena exhibited by
various biological systems are ubiquitous in nature, e.g., the
flashing of fireflies, chorusing of crickets, synchronous firing
of cardiac pacemakers, and metabolic synchrony in yeast cell
suspensions (see for instance \cite{A-B, Bu}). Winfree and
Kuramoto \cite{Ku2, Wi2} pioneered the mathematical treatment of
these synchronized phenomena. They introduced phase models for
large weakly coupled oscillator systems, and showed that the
synchronized behavior of complex biological systems can emerge
from the competing mechanisms of intrinsic randomness and
sinusoidal couplings. The kinetic Kuramoto equation has been
widely used in the literature \cite{A-B} to analyze the phase
transition from a completely disordered state to a partially
ordered state as the coupling strength increases from zero.
Suppose that $g=g(\Omega)$ is an integrable steady probability
density function for natural frequencies with a compact support
(see \eqref{dist} for details). Let $f = f(\theta, \Omega, t)$ be
the probability density function of Kuramoto oscillators in
$\theta \in \bbt := \bbr/(2\pi \bbz)$ with a natural frequency
$\Omega$ at time $t$ as in \cite{La}. The kinetic Kuramoto
equation(KKE) is given as follows:
\begin{align}
\begin{aligned} \label{k-ku}
\displaystyle &\partial_t f + \partial_{\theta} (\omega[f] f) = 0, \qquad (\theta, \Omega) \in \bbt \times \bbr,~~t > 0, \\
& \omega[f](\theta, \Omega, t) = \Omega - K \int_{\bbt} \sin(\theta-\theta_*) \rho(\theta_*, t) d\theta_*, \quad \rho(\theta_*, t) := \int_{\bbr} f d\Omega_*,
\end{aligned}
\end{align}
subject to the initial data:
\begin{equation} \label{ini}
 f(\theta, \Omega, 0) = f_0(\theta, \Omega), \quad  \int_{\bbt} f_0 d\theta = g(\Omega).
\end{equation}

Note that KKE \eqref{k-ku} can be regarded as a scalar
conservation law with a nonlocal flux, and it has been derived
from a previously proposed Kuramoto model  \cite{Ch, La}. However,
to the best of authors' knowledge, few studies have investigated
the qualitative properties of the KKE, such as an asymptotic
behavior and stability of some equilibria. \newline

The main results of this paper can be summarized as follows.
First, we present sufficient conditions for the emergence of
completely synchronized states. More precisely, when many coupled
limit-cycle oscillators have the same natural frequency (identical
oscillators), and the support of the initial Radon measure is
confined in a half circle, we show that the measure-valued
solution approaches a multiple of the Dirac delta measure
concentrated on the initial average phase value and the common
natural frequency value time-asymptotically; in other words,
complete phase synchronization occurs asymptotically. For this
purpose, we lift the finite-dimensional result for the Kuramoto
model (KM) to the infinite-dimensional KKE. Second, we present a
contraction property of the KKE in the Wasserstein $p$-distance
for measure valued solutions with the same natural frequency
distribution by using a strategy similar to the one described in
\cite{C-T, LT}. We define a cumulative distribution function of a
density function $f$ for the KKE, say $F$, and we derive a new
integro-differential equation using its pseudo-inverse function.
Then, we use simple techniques for the optimal mass transport in
one-dimension, i.e., the equivalence relation between the
Wasserstein $p$-distance and the $L^p$-distance of the
corresponding pseudo-inverse of $F$ in order to obtain the
exponential decay estimate of the Wasserstein $p$-distance between
two measure-valued solutions. \newline

The rest of this paper is organized as follows. In Section 2, we
briefly review the Kuramoto model and its mean-field version (the
KKE), and we provide several a priori estimates. In Section 3, we
revisit an existence theory of measure valued solutions to the
KKE, and we present several a priori estimates, in particular, we
provide a finite-time stability estimate for measure-valued
solutions in a bounded Lipschitz distance. In Section 4, we show
the emergence of  completely synchronized states by lifting the
corresponding results for the KM to the KKE using the argument of
the particle-in-cell method. This strategy has been employed in
the Cucker-Smale flocking model in \cite{CFRT}. Section 5 is
devoted to the contraction property of the KKE using the method of
optimal mass transport as described in \cite{C-T, LT}. Finally,
the conclusions are stated in Section 6.

\newpage

%
%
%
%
\section{Preliminaries}
\setcounter{equation}{0}

In this section, we briefly review the particle Kuramoto model and
its kinetic mean-field model. Consider an ensemble of sinusoidally
coupled nonlinear oscillators that can be visualized as active
rotors on the circle $\bbs^1$. Throughout the paper, we will
identify a rotor with an oscillator. Let $x_i = e^{\sqrt{-1}
\theta_i}$ be the position of the $i$-th rotor. Then, the dynamics
of $x_i$ is completely determined by that of phase $\theta_i$. In
the absence of coupling, the phase equation for $\theta_i$ is
simply given by the decoupled ODE system:
\[
\frac{d\theta_i}{dt} = \Omega_i, \qquad \mbox{i.e.,} \quad \theta_i(t) = \theta_i(0) + \Omega_i t,
\]
where $\Omega_i$ is the natural phase-velocity (frequency) and is
assumed to be a random variable extracted from the density
function $g = g(\Omega)$:
\begin{align}
\begin{aligned} \label{dist}
& g(-\Omega) = g(\Omega), \quad \mbox{spt} (g)~\mbox{is bounded}~\\
& \int_{\bbr} \Omega g(\Omega) d\Omega = 0, \quad \int_{\bbr} g(\Omega) d\Omega = 1.
\end{aligned}
\end{align}
In the seminal work \cite{Ku2} of Kuramoto, he derived a coupled
phase model heuristically from the complex Ginzburg-Landau system. The KM is given by
\begin{equation} \label{Kuramoto}
\displaystyle \frac{d\theta_i}{dt} = \Omega_i - \frac{K}{N}
\sum_{j=1}^{N} \sin(\theta_i - \theta_j), \quad t >0, \quad i=1,
\cdots, N,
\end{equation}
subject  to initial data:
\begin{equation} \label{ini-p}
\theta_{i}(0) = \theta_{i0}.
\end{equation}
Note that the first term on the R.H.S. of \eqref{Kuramoto} represents
the intrinsic randomness, whereas the second term denotes the
nonlinear attractive coupling. Hence, synchronized states for
system \eqref{Kuramoto} will emerge, when the nonlinear coupling
dominates the intrinsic randomness. \newline

The system \eqref{Kuramoto} has been extensively studied over the last three
decades, and it remains a popular subject in nonlinear dynamics and
statistical physics (see review articles and a book \cite{A-B,
B-S, C-D, J-M-B, P-R-K, St}). In \cite{Ku1, Ku2}, Kuramoto first
observed that in the mean-field limit($N \to \infty$), the system
\eqref{Kuramoto} with a unimodal distribution function $g(\Omega)$
(which is assumed to be one-humped and symmetric with respect to
mean frequency $\Omega^p_{c} := \frac{1}{N} \sum_{i=1}^{N}
\Omega_i)$ has a continuous dynamical phase transition at a
critical value of the coupling strength $K_{cr}:$
\[
\displaystyle K_{cr} = \frac{2}{\pi g(0)},  \quad \mbox{in the
mean-field limit}.
\]
Moreover, he introduced an asymptotic order parameter $r^{\infty} \in [0, 1]$ to measure the degree of the phase
synchronization in mean-field limit:
\[
 r^{\infty}(K) := \lim_{t \to \infty} \lim_{N \to \infty} \Big| \frac{1}{N} \sum_{i=1}^{N} e^{\sqrt{-1} \theta_i(t)} \Big|,
\]
and he observed that this quantity $r^{\infty}$ changes from zero
to a non-zero value, when the coupling strength $K$ exceeds a
critical value $K_{cr}$. Note that for an initial phase
configuration that is uniformly distributed on the interval $[0,
2\pi)$, the quantity $r^{\infty}$ is exactly zero, whereas for a
completely synchronized configuration $\theta_i = \theta_c$ for
$i=1, \cdots, N$, $r^{\infty}$ becomes the unity. Therefore we can
regard $r^{\infty}$ as the ``{\it order parameter}'' measuring the
degree of synchronization. Before we conclude this subsection, we
recall a complete phase synchronization result from \cite{H-H-K}.
For this purpose, we introduce the diameters of the phase
configuration $\theta = (\theta_1, \cdots, \theta_N)$ and the
natural frequency set as follows.
\[
D_{\theta}(t) := \max_{1\leq i, j \leq N} |\theta_i(t) - \theta_j(t)|, \quad t \geq 0, \qquad D_{\Omega}:= \max_{i,j} |\Omega_i - \Omega_j |.
\]
\begin{proposition}\label{id-paricle}
\emph{\cite{H-H-K}} Suppose that the natural frequencies, the coupling strength and initial configuration satisfy
\[
\Omega_i = \Omega_j, \quad i \not = j, \quad K > 0, \quad D_0 := D_{\theta}(0) < \pi,
\]
and let $\theta = \theta(t)$ be the smooth solution to the system
\eqref{Kuramoto}-\eqref{ini-p} with initial phase configuration $\theta_0$. Then we have
\begin{equation} \label{decay}
\displaystyle e^{-Kt} D_0 \leq D_{\theta}(t) \leq
 e^{-K \alpha t} D_0, \quad t \geq 0,
\end{equation}
where $\alpha$ is the positive constant only depending on the
diameter of the initial phase configuration given by
\[\displaystyle  \alpha : = \frac{\sin D_0}{D_0}. \]
\end{proposition}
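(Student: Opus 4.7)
The plan is to reduce to the case of vanishing common natural frequency, then track the evolution of the phase diameter $D_\theta(t)=\theta_M(t)-\theta_m(t)$, where $\theta_M$ and $\theta_m$ denote indices realizing the max and the min respectively. Since all $\Omega_i$ coincide, moving to the comoving frame $\theta_i \mapsto \theta_i - \Omega_1 t$ leaves the interaction sum invariant, so without loss of generality $\Omega_i=0$ for every $i$. Because the phase diameter is the maximum over finitely many smooth quantities, it is locally Lipschitz and differentiable almost everywhere, and at each regular time one may pick representative extremal indices and compute
\begin{equation*}
\frac{d}{dt}D_\theta = -\frac{K}{N}\sum_{j=1}^N\bigl[\sin(\theta_M-\theta_j)-\sin(\theta_m-\theta_j)\bigr].
\end{equation*}

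The key step is the sum-to-product identity
\begin{equation*}
\sin(\theta_M-\theta_j)-\sin(\theta_m-\theta_j) = 2\sin\!\Bigl(\tfrac{D_\theta}{2}\Bigr)\cos\!\Bigl(\tfrac{\theta_M+\theta_m}{2}-\theta_j\Bigr),
\end{equation*}
which factors out the diameter. I would then exploit the fact that $\theta_j\in[\theta_m,\theta_M]$ to bound the cosine argument by $D_\theta/2<\pi/2$, so every cosine term lies in $[\cos(D_\theta/2),1]$. This yields the two-sided bound
\begin{equation*}
-2K\sin\!\Bigl(\tfrac{D_\theta}{2}\Bigr) \leq \frac{d}{dt}D_\theta \leq -2K\sin\!\Bigl(\tfrac{D_\theta}{2}\Bigr)\cos\!\Bigl(\tfrac{D_\theta}{2}\Bigr) = -K\sin D_\theta.
\end{equation*}

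In particular, the right-hand side is non-positive as long as $D_\theta<\pi$, so the assumption $D_0<\pi$ guarantees that $D_\theta(t)$ is monotone non-increasing and stays in $(0,D_0]$ for all $t\geq 0$; this is the invariance property that keeps us inside a safe half-circle. On this interval the map $x\mapsto \sin x/x$ is decreasing, hence $\sin D_\theta \geq (\sin D_0/D_0)\,D_\theta=\alpha D_\theta$, and Gr\"onwall's inequality yields the upper bound $D_\theta(t)\leq D_0 e^{-K\alpha t}$. For the lower bound, I would use $\sin(D_\theta/2)\leq D_\theta/2$ in the left inequality to get $\dot D_\theta \geq -K D_\theta$, and then apply Gr\"onwall once more to obtain $D_\theta(t)\geq D_0 e^{-Kt}$.

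The main technical subtlety is justifying the differentiation of $D_\theta$: the max and min indices may change, so the derivative above holds only in an a.e. / Dini sense. I would handle this either by appealing to Rademacher's theorem on the Lipschitz function $D_\theta$ and noting that at differentiability points the extremal indices can be chosen to yield the identity above, or equivalently by working with one-sided Dini derivatives and Gr\"onwall's lemma in that weaker form. Once this technicality is dispatched, the computation is essentially algebraic.
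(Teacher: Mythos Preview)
Your argument is correct. The paper does not actually supply its own proof of this proposition; it simply cites the result from \cite{H-H-K} and moves on. What you have written is precisely the standard proof from that reference: reduce to the zero-frequency case, differentiate the phase diameter using extremal indices, apply the sum-to-product identity to factor out $\sin(D_\theta/2)$, use $|(\theta_M+\theta_m)/2-\theta_j|\le D_\theta/2<\pi/2$ to confine the cosine factor, and close with the monotonicity of $x\mapsto(\sin x)/x$ and Gr\"onwall. Your handling of the non-smoothness of $D_\theta$ via Dini derivatives or Rademacher is also the usual way this technicality is dispatched. There is nothing to compare against in the present paper.
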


We also recall the estimate of existence of a trapping region for
non-identical oscillators from \cite{C-H-J-K} as follows.
\begin{lemma}\label{nid-particle}
\emph{\cite{C-H-J-K}} Let $\theta = \theta(t)$ be the global smooth
solution to \eqref{Kuramoto}-\eqref{ini-p} satisfying
\[
0 < D_0 < \pi, \quad D_{\Omega} > 0, \quad K> K_e := \frac{D_{\Omega}}{\sin D_0}.
\]
Then we have
\begin{eqnarray*}
&& (i)~\sup_{t \geq 0} D_{\theta}(t) \leq D_0 < \pi. \cr
&& (ii)~\exists~~t_0 > 0 \mbox{ such that} ~\sup_{t \geq t_0} D_{\theta}(t) \leq D^{\infty},
\end{eqnarray*}
where $D^{\infty}$ is defined by
\[
D^{\infty}:=\arcsin \Big[ \frac{D_\Omega}{K}\Big ] \in \Big( 0,
\frac{\pi}{2} \Big).
\]
\end{lemma}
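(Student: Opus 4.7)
The plan is to reduce the problem to a scalar differential inequality for the phase diameter $D_\theta(t)$ and then to compare with an autonomous one-dimensional ODE whose phase portrait is explicit. Since $D_\theta$ is a finite maximum of smooth functions, it is locally Lipschitz and differentiable almost everywhere; at any time at which the extremal pair of indices $M(t), m(t)$ satisfying $D_\theta = \theta_M - \theta_m$ is unique, I can compute $\dot D_\theta$ directly from \eqref{Kuramoto}. Using the sum-to-product identity
\[
\sin(\theta_M-\theta_j) - \sin(\theta_m-\theta_j) = 2\sin\!\Bigl(\tfrac{D_\theta}{2}\Bigr)\cos\!\Bigl(\tfrac{\theta_M+\theta_m-2\theta_j}{2}\Bigr),
\]
and noting that $\theta_m \le \theta_j \le \theta_M$ forces each cosine argument into $[-D_\theta/2, D_\theta/2]$, I will obtain, so long as $D_\theta < \pi$, the closed inequality
\begin{equation}\label{plan:diffineq}
\dot D_\theta(t) \le D_\Omega - K\sin D_\theta(t),
\end{equation}
after bounding each cosine below by $\cos(D_\theta/2) > 0$ and collapsing $2\sin(D_\theta/2)\cos(D_\theta/2) = \sin D_\theta$.

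Next, I will translate the hypothesis $K > D_\Omega/\sin D_0$ into the purely geometric statement that $D_0$ lies in the open interval $(D^\infty, \pi - D^\infty)$: the hypothesis is equivalent to $\sin D_0 > D_\Omega/K = \sin D^\infty$, and combined with $0 < D_0 < \pi$ and $D^\infty \in (0,\pi/2)$ this pins down the interval. Comparing \eqref{plan:diffineq} with the scalar ODE $\dot y = D_\Omega - K\sin y$, $y(0) = D_0$, whose right-hand side is strictly negative on $(D^\infty, \pi - D^\infty)$, will yield $D_\theta(t) \le y(t) \le D_0 < \pi$ for all $t \ge 0$, establishing assertion (i); note in particular that this keeps $D_\theta$ in the regime in which \eqref{plan:diffineq} is valid.

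For (ii), the idea is to exploit the strict negativity of the right-hand side of \eqref{plan:diffineq} away from $D^\infty$. As long as $D_\theta(t) \ge D^\infty + \varepsilon$, \eqref{plan:diffineq} gives $\dot D_\theta \le -c_\varepsilon$ with $c_\varepsilon := K\sin(D^\infty+\varepsilon) - D_\Omega > 0$; since $D_\theta \ge 0$, the diameter must enter $[0, D^\infty + \varepsilon]$ by time $t_\varepsilon := D_0/c_\varepsilon$, and at the boundary $D_\theta = D^\infty + \varepsilon$ the same inequality ensures $\dot D_\theta \le -c_\varepsilon < 0$, so the diameter is trapped there. This proves the asymptotic trapping statement~(ii) at the tolerance $\varepsilon$, with $t_0 = t_\varepsilon$.

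The main technical obstacle is the very first step, the pointwise differentiation of the non-smooth quantity $D_\theta$: the extremal pair $(M(t), m(t))$ can switch, and one cannot simply apply the chain rule. The cleanest remedy is to work with upper Dini derivatives, showing that $D^{+} D_\theta$ satisfies \eqref{plan:diffineq} everywhere via an envelope/Danskin-type argument, and then to invoke a Gronwall comparison lemma phrased for Dini derivatives. Alternatively one can partition $[0,\infty)$ into countably many intervals on which the extremal pair is constant and propagate \eqref{plan:diffineq} across switching times by continuity. With that care in place the remainder of the proof is an elementary phase-line analysis of $\dot y = D_\Omega - K\sin y$.
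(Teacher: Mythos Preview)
The paper does not prove this lemma; it is quoted from \cite{C-H-J-K} and used as a black box (see the proof of Lemma~\ref{lemma-appli-1}, where the explicit entry time $t_0^N=(D_\theta(\mu_0^N)-D^{\infty,N})/(K\sin D_\theta(\mu_0^N)-D_\Omega(\mu_0^N))$ is simply imported). Your argument is the standard one and is correct for part~(i): the sum-to-product reduction and the lower bound $\cos\bigl(\tfrac{\theta_M+\theta_m-2\theta_j}{2}\bigr)\ge\cos(D_\theta/2)$ on the trapping set $D_\theta<\pi$ are exactly the right moves, and the Dini-derivative formulation you outline is the clean way to handle the switching of extremal indices.

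For part~(ii) you are right to stop at the $\varepsilon$-relaxed conclusion $\sup_{t\ge t_\varepsilon}D_\theta(t)\le D^\infty+\varepsilon$. This is not merely a limitation of your method: the literal statement $\sup_{t\ge t_0}D_\theta(t)\le D^\infty$ for some finite $t_0$ is in fact unattainable in general. Take $N=2$ with $\theta_1(0)>\theta_2(0)$ and $\Omega_1-\Omega_2=D_\Omega>0$; then $D_\theta$ solves the equality $\dot D_\theta=D_\Omega-K\sin D_\theta$, for which $D^\infty$ is an equilibrium approached monotonically from above but never reached, so $D_\theta(t)>D^\infty$ for every $t$. Thus the correct reading of (ii) is the asymptotic one you prove, namely $\limsup_{t\to\infty}D_\theta(t)\le D^\infty$, and this is all that is used downstream in the paper (what matters in Theorem~\ref{thm-wp} is only that $D_\theta$ is eventually bounded by some constant strictly less than $\pi/2$, so that $\cos D^\infty>0$). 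Your proof is therefore adequate for the paper's purposes; just be aware that the inequality in (ii) should be understood up to an arbitrarily small slack.
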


\begin{remark}\label{cru-rmk-0}
If we set the average phase and natural frequency of the particles as
\[
\theta^p_c(t) := \frac{1}{N}\sum_{i=1}^{N}\theta_i(t),\quad \Omega^p_c := \frac{1}{N}\sum_{i=1}^{N}\Omega_i,
\]
then from the particle KM \eqref{Kuramoto}, one can easily obtain
\[
\theta_c^p(t) = \theta_c^p(0) + \Omega^p_c t, \quad \mbox{for all} \quad t\geq 0.
\]
Without loss of generality, we may assume that $\Omega^p_c = 0$ using
the phase-shift framework. Then we notice that identical and
non-identical oscillators satisfying the assumptions in
Proposition \ref{id-paricle} and Lemma \ref{nid-particle} satisfy
\begin{equation*}
\left\{
  \begin{array}{ll}
   \displaystyle \theta_i(t) \to \theta_c(0) \quad as \quad t \to \infty, \mbox{ for identical oscillators,}\\
   \displaystyle \theta_i(t) \in \Big(\theta^p_c(0) - D^{\infty},\theta^p_c(0) + D^{\infty} \Big) \quad \mbox{for all} \quad t\geq t_0, \mbox{ for non-identical oscillators.}
\end{array}
\right.
\end{equation*}
Note that the conditions and decay estimates \eqref{decay} are
independent of the particle-number $N$. For the related
synchronization estimates for the KM, we refer to \cite{C-H-J-K,
C-H-K-K, C-S, H-K, H-S, M-S1, M-S2}.
\end{remark}

We rewrite the system \eqref{Kuramoto} as a dynamical system on the extended phase space
$\bbt \times \bbr$ for $(\theta_i, \Omega_i):$ For $i=1, \cdots, N$,
\begin{equation} \label{re-ku}
\frac{d\theta_i}{dt} = \Omega_i - \frac{K}{N}
\sum_{j=1}^{N} \sin(\theta_i - \theta_j), \quad \frac{d\Omega_i}{dt} = 0, \qquad t > 0.
\end{equation}

Throughout this paper, we will use the interval $[0,2\pi)$ to denote $\bbt = \bbr / 2\pi \bbz$, i.e., $\theta \in [0,2\pi)$ implies that $\theta$
satisfies $\theta + 2\pi\bbz = \theta$.

%
%
%

\section{Existence theory of measure valued solutions}
\setcounter{equation}{0} In this section, we briefly review the
existence of measure valued solutions to \eqref{k-ku}. For the KM,
the rigorous mean-field limit was first done by Lancellotti
\cite{La} using Neunzert's general theory for the Vlasov equation
\cite{Ne, Sp}. Optimal transport arguments allow to generalize
these results in several ways for granular and flocking models
\cite{B-C-P, C-C-R}. H. Chiba recently obtained the same
mean-field limit based on functional tools \cite{Ch}. For a later
use and reader's convenience, we present several estimates for
measure valued solutions to the KKE.

\subsection{Measure-theoretic framework} In this subsection, we discuss a
measure-theoretic formulation of the KKE. \newline

Let ${\mathcal M}([0,2\pi) \times\bbr)$ be the set of nonnegative
Radon measures on $[0, 2\pi) \times \bbr$, which can be regarded
as nonnegative bounded linear functionals on ${\mathcal
C}([0,2\pi) \times \bbr)$. For a Radon measure $\nu \in {\mathcal
M}([0,2\pi) \times \bbr)$, we use the standard duality relation:
\[
\displaystyle   \langle \nu, h \rangle := \int_0^{2\pi} \int_{\bbr} h(\theta,\Omega) \nu(d\theta, d\Omega), \quad h \in \mathcal{C}_0([0,2\pi) \times\bbr).
\]

The definition of a measure-valued solution to equation
\eqref{k-ku} is given as follows.
\begin{definition}
For $T \in [0, \infty)$, let $\mu \in L^{\infty}([0, T); {\mathcal
M}([0,2\pi) \times\bbr))$ be a measure valued solution to \eqref{k-ku} with
an initial Radon measure $\mu_0 \in {\mathcal M}([0,2\pi) \times\bbr)$ if and only
if $\mu$ satisfies the following conditions:
\begin{enumerate}
\item
$\mu$ is weakly continuous:
\[ \langle \mu_t, h \rangle ~ \mbox{ is continuous as a function of $t$},~~\forall~ h \in \mathcal{C}_0([0,2\pi) \times\bbr). \]
\item
$\mu$ satisfies the integral equation: $\forall~h \in \mathcal{C}_{0}^{1}([0,2\pi)
\times\bbr\times [0,T))$,
\begin{equation} \label{def1}
 \langle \mu_t, h(\cdot,\cdot, t) \rangle - \langle \mu_0, h(\cdot,\cdot, 0) \rangle = \int_0^{t} \langle \mu_s, \partial_{s} h + \omega[\mu] \partial_{\theta} h \rangle ds,
\end{equation}
where $\omega[\mu](\theta,\Omega,s)$ is defined by
\begin{equation} \label{def2}
\omega[\mu](\theta,\Omega,s) := \Omega - K (\mu_s * \sin)\theta
\,.
\end{equation}
Here $*$ denotes the standard convolution, i.e.,
\[
(\mu_s * \sin)\theta = \int_0^{2\pi} \int_{\bbr} \sin(\theta - \theta_*)
\mu_s(d\theta_*,d\Omega).
\]
\end{enumerate}
\end{definition}
\begin{remark}
(i) Let $f = f(\theta, \Omega, t)$ be a classical solution to \eqref{k-ku}. Then the measure $\mu := f d\Omega
d\theta$ is a measure valued solution to \eqref{k-ku}. \newline

\noindent (ii) Note that the empirical measure
\[
\mu^N_t = \frac{1}{N} \sum_{i=1}^{N} \delta_{\theta_i(t)} \otimes
\delta_{\Omega_i(t)}, \quad \mbox{where $(\theta_i(t),
\Omega_i(t))$ is a  solution of \eqref{re-ku}}\,,
\]
is a measure valued solution to \eqref{k-ku} in the sense of
Definition 3.1. Thus, the solutions to the KM \eqref{Kuramoto} can
be treated as measure valued solutions via an empirical measure.
Here, $\delta_{z_*}$ is the Dirac measure concentrated at $z=z_*$.
\newline

\noindent (iii) Since the density function $g(\Omega)$ has a
compact support and the dynamics \eqref{k-ku} only governs the $\theta$-variable, we can see that the projected $\Omega$-support of
$\mu_t$ also has a compact support as well. Under the assumption of
the compact support of $g$, we can expand the class of test
functions to ${\mathcal C}([0,2\pi) \times \bbr)$. \newline

\noindent (iv) By choosing $h = \Omega$ in \eqref{def1} (see above
comment in (iii)), we have
\[ \langle \mu_t, \Omega \rangle =  \langle \mu_0, \Omega
\rangle, \quad t > 0. \]
\end{remark}
\begin{lemma}\label{conservation-lem}
Suppose that the density function $g =g(\Omega)$ has a compact
support and the initial measure satisfies
\[
\langle \mu_0, \Omega \rangle = 0,
\]
and let $\mu \in L^{\infty}([0, T); {\mathcal M}([0,2\pi) \times
\bbr))$ be a measure valued solution to \eqref{k-ku}. Then for $t \geq 0$, we have
\[
\langle \mu_t, 1 \rangle = \langle \mu_0, 1 \rangle=1, \qquad
\langle \mu_t, \theta \rangle = \langle \mu_0, \theta \rangle,
\qquad t \geq 0.
\]
\end{lemma}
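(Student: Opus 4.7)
\medskip
\noindent\textbf{Proof plan for Lemma \ref{conservation-lem}.}

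The plan is to plug the test functions $h \equiv 1$ and $h(\theta,\Omega,t)=\theta$ into the weak formulation \eqref{def1}, so that the two conservation identities reduce to a mass computation and an antisymmetry computation for the Kuramoto interaction kernel. For the first identity, I would insert $h\equiv 1$, whose $s$- and $\theta$-derivatives vanish, and conclude immediately that $\langle \mu_t,1\rangle=\langle \mu_0,1\rangle$. The fact that this common value equals $1$ follows from the initial normalisation \eqref{ini}: the $\theta$-marginal of $\mu_0$ is $g(\Omega)\,d\Omega$, which integrates to $1$ by \eqref{dist}.

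For the second identity, I would insert $h(\theta,\Omega,t)=\theta$. Then $\partial_s h=0$ and $\partial_\theta h=1$, so \eqref{def1} reduces to
\begin{equation*}
\langle \mu_t,\theta\rangle-\langle \mu_0,\theta\rangle
=\int_0^t \langle \mu_s,\omega[\mu_s]\rangle\,ds.
\end{equation*}
Using the definition \eqref{def2} of $\omega[\mu_s]$, the integrand splits as
\begin{equation*}
\langle \mu_s,\omega[\mu_s]\rangle
=\langle \mu_s,\Omega\rangle
-K\!\int_0^{2\pi}\!\!\int_\bbr\!\!\int_0^{2\pi}\!\!\int_\bbr \sin(\theta-\theta_*)\,\mu_s(d\theta,d\Omega)\,\mu_s(d\theta_*,d\Omega_*).
\end{equation*}
The first term vanishes by Remark 3.1(iv) together with the hypothesis $\langle \mu_0,\Omega\rangle=0$. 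The double integral vanishes by antisymmetry: swapping the dummy variables $(\theta,\Omega)\leftrightarrow(\theta_*,\Omega_*)$ flips the sign of $\sin(\theta-\theta_*)$, so the integral equals its own negative. Hence $\langle \mu_s,\omega[\mu_s]\rangle\equiv 0$ and $\langle \mu_t,\theta\rangle=\langle \mu_0,\theta\rangle$ for all $t\geq 0$.

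The main technical wrinkle—and the step I expect to be the actual obstacle—is that $h(\theta)=\theta$ is not continuous as a function on the torus $\bbt=\bbr/2\pi\bbz$ (it jumps at $2\pi$), whereas the class of test functions used to define measure-valued solutions consists of $\mathcal{C}^1_0$ functions on $[0,2\pi)\times\bbr\times[0,T)$. I would deal with this by approximation: choose a family of $\mathcal{C}^1$ cutoffs $\chi_\varepsilon$ equal to $1$ on $[\varepsilon,2\pi-\varepsilon]$ and vanishing near the endpoints, use $h_\varepsilon(\theta)=\theta\,\chi_\varepsilon(\theta)$ in \eqref{def1}, and pass to the limit $\varepsilon\to 0$. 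The extension of admissible test functions to $\mathcal{C}([0,2\pi)\times\bbr)$ pointed out in Remark 3.1(iii) (using compactness of $\mathrm{spt}\,g$ and hence of the $\Omega$-projection of $\mathrm{spt}\,\mu_t$) together with dominated convergence makes this limit harmless. An alternative route, which I would keep in reserve, is to verify the identity first on empirical measures $\mu_t^N$ coming from \eqref{re-ku}—where the statement is the elementary computation $\frac{d}{dt}\sum_i \theta_i/N=\sum_i\Omega_i/N-\frac{K}{N^2}\sum_{i,j}\sin(\theta_i-\theta_j)=0$—and then transfer it to general measure-valued solutions by the stability estimate in the bounded Lipschitz distance announced for Section 3.
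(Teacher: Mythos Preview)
Your proposal is correct and follows essentially the same route as the paper: set $h\equiv 1$ for mass conservation, then set $h=\theta$, split $\langle \mu_s,\omega[\mu_s]\rangle$ into the $\Omega$-moment (zero by Remark 3.1(iv) and the hypothesis) and the sine convolution (zero by antisymmetry). Your discussion of the admissibility of $h=\theta$ as a test function is more careful than the paper's own proof, which simply uses $h=\theta$ directly, implicitly leaning on the extension in Remark 3.1(iii).
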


\begin{proof} In \eqref{def1}, we set $h=1$. Then, the R.H.S. of \eqref{def1} will be zero hence, we have conservation of total mass.
For the time evolution of the first moment of $\theta$, it follows
from Remark 3.1 (iv) that
\[
\langle \mu_t, \Omega\rangle = 0, \quad t > 0.
\]
We now set $h(\theta)
= \theta$ in \eqref{def1} and use \eqref{def2} to get
\begin{eqnarray*}
\langle \mu_t, \theta \rangle  &=& \langle \mu_0, \theta \rangle + \int_0^t \langle \mu_s, \omega[\mu_s] \rangle ds \cr
                                 &=& \langle \mu_0, \theta \rangle + \int_0^t \Big( \langle \mu_s, \Omega \rangle - K \langle \mu_s, (\mu_s * \sin)\theta \rangle \Big) ds \cr
                                 &=& \langle \mu_0, \theta \rangle -K \int_0^t \langle \mu_s, (\mu_s * \sin)\theta \rangle ds
                                 =\langle \mu_0, \theta \rangle,
\end{eqnarray*}
where we used the anti-symmetry of $\sin(\theta - \theta_*)$ to
determine that $\langle \mu_s, (\mu_s * \sin)\theta \rangle = 0$.
\end{proof}

\subsection{A priori local stability estimate}
In this part, we recall the stability estimate for measure valued
solutions to \eqref{k-ku} in the bounded Lipschitz distance. This
stability estimate is crucial to the global existence of a measure
valued function for the KKE. First, we review
the definition of the bounded Lipschitz distance presented in \cite{H-L, Ne,
Sp}. We define the admissible set ${\mathcal S}$ of test
functions as
\[
{\mathcal S} := \Big \{ h: [0,2\pi) \times \bbr \to
\bbr~:~||h||_{L^{\infty}} \leq 1, \quad \mbox{Lip}(h) \leq 1 \Big
\}\,,
\]
where
$$
\mbox{Lip}(h) := \sup_{(\theta_1, \Omega_1) \not =  (\theta_2,
\Omega_2)} \frac{|h(\theta_1, \Omega_1) - h(\theta_2, \Omega_2)
|}{|(\theta_1, \Omega_1) - (\theta_2, \Omega_2)|}\,.
$$

\begin{definition}
\emph{\cite{Ne, Sp}} Let $\mu, \nu \in {\mathcal M}([0,2\pi) \times \bbr)$ be two Radon
measures. Then the bounded Lipschitz distance $d(\mu, \nu)$ between
$\mu$ and $\nu$ is given by
\[
\displaystyle d(\mu, \nu) := \sup_{h \in {\mathcal S}} \Big | \langle \mu,  h \rangle - \langle \nu,  h \rangle \Big|.
\]
\end{definition}

\begin{remark}
1. The space of Radon measures ${\mathcal M}([0,2\pi) \times \bbr)$ equipped with the
metric $d(\cdot, \cdot)$ is a complete metric space.
\newline

\noindent 2. The bounded Lipschitz distance $d$ for compactly
supported probability measures is equivalent to the Wasserstein-1
distance (Kantorovich-Rubinstein distance) $W_1$ (see \cite{Sp}):
\[
\displaystyle W_1(\mu, \nu) := \inf_{\gamma \in \Pi(\mu, \nu)}
\int_{0}^{2\pi}\int_\bbr \int_{0}^{2\pi}\int_\bbr |(\theta
-\theta_*,\Omega-\Omega_*)| \,\gamma(d(\theta,\Omega),
d(\theta_*,\Omega_*)),
\]
where $\Pi(\mu, \nu)$ is the set of all product measures on
$([0,2\pi) \times\bbr) \times ([0,2\pi) \times\bbr)$ such that
their marginals are $\mu$ and $\nu,$ respectively. Both equivalent
distances endow the weak-$*$ convergence of measures with metric
structure in bounded sets.
\newline

\noindent 3. For any $h \in \mathcal{C}([0,2\pi) \times \bbr)$ with
$\|h\|_{L^{\infty}} \le a$ and $\mbox{Lip}(h) \le b$, we have
\begin{equation*}
\Big | \langle \mu, h \rangle - \langle \nu, h \rangle \Big| \leq \max \{a, b \} d(\mu, \nu).
\end{equation*}
\end{remark}

%
%
%
%

\subsection{Existence of a measure valued function for KKE}
We briefly present the existence of a measure valued solution to
the KKE using a Lancellotti's argument \cite{La}. We first note a
simple approximation argument of measures by smooth positive
densities. For given $\mu_0 \in
L^{\infty}(0,T;\mathcal{M}([0,2\pi) \times \bbr))$, we set for
$0 < \varepsilon < 1$,
\[
\tilde{\mu}_0^{\varepsilon} := ((1-\varepsilon)\mu_0+\varepsilon
\chi_R)
* \eta_{\varepsilon} = \int_{[0,2\pi] \times \bbr}
\eta^1_{\varepsilon}(\theta-\tilde\theta)
\,\eta^2_{\varepsilon}(\Omega-\tilde\Omega)
\,((1-\varepsilon)\mu_0+\varepsilon
\chi_{\mathcal{R}})(d\tilde\theta,d\tilde\Omega),
\]
where $\eta_{\varepsilon} = (\eta^1_{\varepsilon}, \eta^2_{\varepsilon})$, $\eta^1_{\varepsilon}$ is a periodic compactly supported mollifier with period $2\pi$ and $\eta^2_{\varepsilon}$ is
a standard compactly supported mollifier satisfying
$\|\eta^1_{\varepsilon}\|_{L^1([0,2\pi])} = 1 $, $\|\eta^2_{\varepsilon}\|_{L^1(\bbr)} = 1$, and
$\chi_{\mathcal{R}}$ is the uniform probability measure on rectangle $\mathcal{R}$ enclosing $\mbox{spt}(\mu_0)$ such that
$\mathcal{R} \subset [0,2\pi] \times [-C,C]$ with $C>0$. It is
straightforward to check that
\[
d(\tilde{\mu}_0^{\varepsilon},\mu_0)\simeq
W_1(\tilde{\mu}_0^{\varepsilon},\mu_0) \to 0, \quad \mbox{as}
\quad \varepsilon \to 0
\]
and
$\mbox{spt}(\tilde\mu_0^\varepsilon) \subset [0,2\pi] \times [-C_\varepsilon,C_\varepsilon]$.

We remark that if $\mbox{spt}(\mu_0) \subset (0,2\pi) \times [-C,C]$, then $\mbox{spt}(\tilde\mu_0^\varepsilon) \subset (0,2\pi) \times [-C_{\varepsilon},C_{\varepsilon}]$ using a standard compactly supported mollifier $\eta_{\varepsilon}$ for sufficiently small $\varepsilon$.
Since $\tilde{\mu}_0^{\varepsilon}$ is absolutely continuous with
respect to Lebesgue measure $d\theta d\Omega$ and with connected
support, then for all $\varepsilon$ the initial approximated
measure $\tilde{\mu}^{\varepsilon}_0$ can be approximated by a
Dirac comb of uniform masses. This means that there exist point
distributions $\{(\theta^{\varepsilon,N}_{i0},\Omega^{\varepsilon,N}_{i0})\}_{i=1,...,N}$, whose
dependence on $N$ is elapsed for clarity, such that
\begin{equation}\label{inital_aprox}
\lim_{N\to\infty} d(\tilde{\mu}^{\varepsilon}_0,
\tilde{\mu}^{\varepsilon,N}_0) = 0,\qquad
\tilde{\mu}^{\varepsilon,N}_0 := \frac{1}{N} \sum_{i=1}^{N}
\delta_{\theta^{\varepsilon,N}_{i0}} \otimes \delta_{\Omega^{\varepsilon,N}_{i0}}.
\end{equation}
Then we solve the KM with $N$-particles:
\begin{equation}\label{pic}
   \frac{d\theta^{\varepsilon,N}_i}{dt} = \Omega^{\varepsilon,N}_i -
\frac{K}{N}\sum_{j=1}^{N}\sin(\theta^{\varepsilon,N}_i - \theta^{\varepsilon,N}_j), \quad \frac{d\Omega^{\varepsilon,N}_i}{dt} = 0, \quad t > 0, \quad i=1, \cdots, N,
\end{equation}
with initial data:
\[
(\theta^{\varepsilon,N}_i(0), \Omega^{\varepsilon,N}_{i}(0)) = (\theta^{\varepsilon,N}_{i0}, \Omega^{\varepsilon,N}_{i0}).
\]
With solutions $(\theta^{\varepsilon,N}_i(t),\Omega^{\varepsilon,N}_i(t))$ of \eqref{pic}, the approximate solution $\tilde{\mu}^{\varepsilon,N}_t$ for the measure
valued solution can be constructed as a sum of Dirac measures, i.e.,
\begin{equation}\label{pic-t}
\tilde{\mu}^{\varepsilon,N}_t := \frac{1}{N}\sum_{i=1}^{N}\delta_{\theta^{\varepsilon,N}_i(t)} \otimes \delta_{\Omega^{\varepsilon,N}_i(t)}.
\end{equation}

The results of \cite{La,Ne,Sp} imply the
continuous dependence with respect to initial data for measure
valued solutions. Therefore, there exists a constant $C$ depending
on $T$ and the initial support of the measure $\mu_0^1,\mu_0^2$
such that
\begin{equation}\label{cont-dep}
d(\mu^1_t,\mu^2_t) \leq C \,d(\mu^1_0,\mu^2_0).
\end{equation}
for all $t\in[0,T]$. As a consequence of \eqref{cont-dep}, we have
the convergence of approximate solutions and particle
approximations to the measure valued solution of \eqref{k-ku}:
\begin{equation*}
d(\tilde{\mu}^{\varepsilon}_t,\tilde{\mu}^{\varepsilon,N}_t) \leq
C\, d(\tilde{\mu}^{\varepsilon}_0,\tilde{\mu}^{\varepsilon,N}_0)
\qquad \mbox{and} \qquad d(\tilde{\mu}^{\varepsilon}_t,\mu_t) \leq
C\, d(\tilde{\mu}^{\varepsilon}_0,\mu_0).
\end{equation*}
This obviously yields
$$
d(\tilde{\mu}^{\varepsilon,N}_t,\mu_t) \leq C \big(
d(\tilde{\mu}^{\varepsilon,N}_0, \tilde{\mu}^{\varepsilon}_0) +
d(\mu_0, \tilde{\mu}^{\varepsilon}_0)\big)\,.
$$
Hence we first let $\varepsilon \to
0$ and then letting $N \to \infty$ to have
\[
d(\tilde{\mu}^{\varepsilon,N}_t,\mu_t) \to 0.
\]
Therefore we have the following theorem.

\begin{theorem}\label{local-stability}
Suppose that the initial measure $\mu_0 \in {\mathcal M}([0,2\pi)
\times \bbr)$ and let $\mu^{N}_t$ be the approximate solution
constructed by \eqref{pic-t}. Then there exists a unique measure
valued solution $\mu_t \in {\mathcal M}([0,2\pi) \times \bbr)$ to
\eqref{k-ku}  such that $\mu_t$ is the weak-$*$ limit of the
approximate solutions $\mu_t^{N}$ as $N \to \infty$, i.e.,
\[
\lim_{N\to\infty}d(\mu_t, \mu_t^{N})  = 0.
\]
\end{theorem}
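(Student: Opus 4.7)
The plan is to follow the particle-in-cell (PIC) approximation strategy already sketched in the paragraphs preceding the theorem, extracting both existence and uniqueness from the continuous dependence estimate \eqref{cont-dep}. For an arbitrary initial Radon measure $\mu_0$, I would first produce the two-layer approximation $\mu_0 \rightsquigarrow \tilde\mu_0^\varepsilon \rightsquigarrow \tilde\mu_0^{\varepsilon,N}$: the mollification $\tilde\mu_0^\varepsilon$ gives an absolutely continuous measure with positive smooth density and connected compact support, which then admits a uniform-mass Dirac comb approximation $\tilde\mu_0^{\varepsilon,N}$ satisfying \eqref{inital_aprox}. For each fixed $(\varepsilon,N)$ I solve the finite-dimensional Kuramoto system \eqref{pic}, whose global smooth solvability is standard since the right-hand side is globally Lipschitz; the empirical measure \eqref{pic-t} built from its trajectories is then a measure-valued solution of \eqref{k-ku} in the sense of Definition 3.1, by Remark 3.1(ii).

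For existence, I apply the stability inequality \eqref{cont-dep} in two places: to $(\tilde\mu_t^\varepsilon,\tilde\mu_t^{\varepsilon,N})$ and to $(\tilde\mu_t^\varepsilon,\mu_t)$ (the latter a posteriori, by uniqueness of the limit), then use the triangle inequality to obtain the bound
\[
d(\tilde\mu_t^{\varepsilon,N},\mu_t)\leq C\bigl(d(\tilde\mu_0^{\varepsilon,N},\tilde\mu_0^{\varepsilon})+d(\tilde\mu_0^{\varepsilon},\mu_0)\bigr).
\]
A diagonal extraction $N=N(\varepsilon)\to\infty$ as $\varepsilon\to 0$, with $N(\varepsilon)$ chosen so that the first term is below $\varepsilon$, yields a Cauchy sequence in the complete metric space $({\mathcal M}([0,2\pi)\times\bbr),d)$ whose limit $\mu_t$ is my candidate solution. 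To confirm that this limit satisfies the integral identity \eqref{def1}--\eqref{def2}, I pass to the limit in the weak formulation for $\tilde\mu_t^{\varepsilon,N}$: the linear terms $\langle\tilde\mu_t^{\varepsilon,N},h\rangle$, $\langle\tilde\mu_0^{\varepsilon,N},h\rangle$ and $\langle\tilde\mu_s^{\varepsilon,N},\partial_s h+\Omega\partial_\theta h\rangle$ pass to the limit directly by the bounded Lipschitz convergence, while the nonlocal term $\langle\tilde\mu_s^{\varepsilon,N},(\tilde\mu_s^{\varepsilon,N}\ast\sin)\theta\,\partial_\theta h\rangle$ converges because $\sin$ is Lipschitz of norm $1$ and the convergence is uniform in $s\in[0,t]$. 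The weak continuity in $t$ for the limit follows from the uniform-in-$(\varepsilon,N)$ Lipschitz estimate $|\langle\tilde\mu_t^{\varepsilon,N},h\rangle-\langle\tilde\mu_s^{\varepsilon,N},h\rangle|\leq C|t-s|$, which in turn comes from the boundedness of the velocity field $\omega[\tilde\mu^{\varepsilon,N}]$ by $C+K$.

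Uniqueness is then a direct corollary of \eqref{cont-dep}: two solutions with the same initial datum satisfy $d(\mu_t^1,\mu_t^2)\leq C\,d(\mu_0,\mu_0)=0$. The main obstacle in this scheme is the stability estimate \eqref{cont-dep} itself, which is the real analytic content; its proof proceeds by a Dobrushin-type coupling argument, transporting $\mu_t^1$ and $\mu_t^2$ along the characteristic flows generated by $\omega[\mu^1]$ and $\omega[\mu^2]$, exploiting $\mathrm{Lip}(\sin)\le 1$ together with mass conservation (Lemma \ref{conservation-lem}) to obtain a Grönwall inequality for $W_1(\mu_t^1,\mu_t^2)$, which integrates to an exponential-in-$t$ bound on $[0,T]$. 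Since this is precisely what is carried out in \cite{La,Ne,Sp} for the KKE, I would cite those references for this step rather than reproduce the computation, keeping the present proof focused on assembling the approximation limits above.
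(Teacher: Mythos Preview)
Your proposal is correct and follows essentially the same particle-in-cell approximation scheme as the paper: the two-layer regularization $\mu_0\rightsquigarrow\tilde\mu_0^\varepsilon\rightsquigarrow\tilde\mu_0^{\varepsilon,N}$, evolution by the finite Kuramoto system \eqref{pic}, and passage to the limit via the stability estimate \eqref{cont-dep} cited from \cite{La,Ne,Sp}. You supply a bit more logical care than the paper's sketch---the Cauchy-sequence/diagonal extraction and the explicit verification that the limit satisfies \eqref{def1}--\eqref{def2}---but this is elaboration of the same argument rather than a different route.
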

From now on, let us assume that the initial measure is a smooth
absolutely continuous measure with respect to Lebesgue with
connected support. These assumptions can be eliminated by standard
mollifier approximation as above. Therefore, we will proceed by
working on smooth solutions and obtaining estimates depending only
on quantities that pass to the limit in the weak-$*$ sense, and
thus, stable estimates under this approximation. To avoid too much
repetition, this procedure will not be specified in the proofs
below and the statements of the results will be written directly
for measure valued solutions.

%
%
%
\section{Asymptotic complete synchronization estimate}
\setcounter{equation}{0} In this section, we present an asymptotic
synchronization estimate for the KKE \eqref{k-ku} by lifting
corresponding results for the KM \eqref{Kuramoto} using the
argument of the particle-in-cell method \cite{Ra} discussed in the
previous section.

Let $\mu \in L^{\infty}([0, T); {\mathcal M}([0,2\pi) \times \bbr))$ be a measure valued solution to \eqref{k-ku}, and let $R(t)$ and $P(t)$
be the orthogonal $\theta$ and $\Omega$-projections of $\mbox{spt}
(\mu_t)$ respectively, i.e.,
\[ R(t) := \bbp_{\theta} \mbox{spt}(\mu_t), \qquad  P(t) := \bbp_{\Omega} \mbox{spt}(\mu_t), \]
Then it is easy to see that
\[
P(t) = P(0), \qquad t \geq 0.
\]
We also set
\begin{align*}
\begin{aligned}
D_{\theta}(\mu_t) &:= \mbox{diam}(R(t)), \quad
D_{\Omega}(\mu_t) := \mbox{diam}(P(t)), \quad M(t):= \langle \mu_t, 1 \rangle, \\
\theta_c(t) &:= \frac{1}{M(t)}\langle \mu_t, \theta \rangle, \quad \Omega_c(t) := \frac{1}{M(t)} \langle \mu_t, \Omega \rangle.
\end{aligned}
\end{align*}
We observe from Lemma \ref{conservation-lem} that
\begin{equation*}\label{omega-conserve}
M(t) = \langle \mu_t, 1 \rangle = \langle \mu_0, 1 \rangle =
M(0)=1,
\end{equation*}
and since $\Omega_c(0)=0,$ we obtain
\[
\Omega_c(t) = 0 \quad \mbox{and} \quad \theta_c(t) = \theta_c(0), \quad t \geq 0.
\]
\begin{lemma}\label{id-mea-val-sol}
Suppose that the oscillators are identical and let $\mu_0$ be a
given initial probability measure in ${\mathcal M}([0, 2\pi)
\times \bbr)$ satisfying
\[
\langle \mu_0, \theta \rangle  = \pi, \quad D_{\theta}(\mu_0) < \pi, \quad K > 0.
\]
Then the measure valued solution $\mu$  to \eqref{k-ku} - \eqref{ini} with an initial
datum $\mu_0$ satisfies
\[
D_{\theta}(\mu_t) \leq D_{\theta}(\mu_0)e^{-K{\bar \alpha} t}, \qquad t \geq 0,
\]
where
\[
{\bar \alpha} = \frac{\sin D_{\theta}(\mu_0)}{D_{\theta}(\mu_0)}.
\]
\end{lemma}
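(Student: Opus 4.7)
The strategy is to lift the particle-level exponential contraction of Proposition \ref{id-paricle} to the kinetic setting via the particle-in-cell approximation of Section~3. For identical oscillators, the convention \eqref{dist} forces $g = \delta_{0}$, so $P(t) = P(0) = \{0\}$ and every atom of $\mbox{spt}(\mu_{t})$ has $\Omega$-coordinate zero; hence only the $\theta$-diameter matters. The centering condition $\langle\mu_{0},\theta\rangle = \pi$ combined with $D_{\theta}(\mu_{0}) < \pi$ places $\mbox{spt}_{\theta}(\mu_{0})$ inside an arc of length strictly less than $\pi$ about $\pi$, so the $\theta$-diameter is a genuine Euclidean diameter on $(0,2\pi)$, safely away from the identification $0 \equiv 2\pi$; this separation must be maintained along the evolution for the argument to close.

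First, I would pick nodes $\theta_{i0}^{N}\in\mbox{spt}_{\theta}(\mu_{0})$ and form the empirical initial data
\[
\mu_{0}^{N} := \frac{1}{N}\sum_{i=1}^{N}\delta_{\theta_{i0}^{N}}\otimes\delta_{0},
\]
arranged so that $d(\mu_{0}^{N},\mu_{0})\to 0$ while $D_{\theta}^{N}(0)\leq D_{\theta}(\mu_{0}) < \pi$. All particle frequencies vanish, so Proposition \ref{id-paricle} applies to the $N$-particle Kuramoto flow and yields
\[
D_{\theta}^{N}(t)\leq D_{\theta}^{N}(0)\,e^{-K\alpha^{N}t},\qquad \alpha^{N} = \frac{\sin D_{\theta}^{N}(0)}{D_{\theta}^{N}(0)}.
\]
Since $x\mapsto (\sin x)/x$ decreases on $(0,\pi)$ and $D_{\theta}^{N}(0)\leq D_{\theta}(\mu_{0})$, one has $\alpha^{N}\geq \bar\alpha$, hence the uniform-in-$N$ bound $D_{\theta}^{N}(t)\leq D_{\theta}(\mu_{0})\,e^{-K\bar\alpha t}$ for all $t\geq 0$. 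By Theorem \ref{local-stability}, $\mu_{t}^{N}\to \mu_{t}$ in the bounded Lipschitz distance, and a fortiori weakly-$*$, for each fixed $t$.

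The remaining step, and the main (mild) obstacle, is to transfer the uniform particle bound to the limit measure. Under weak-$*$ convergence, every $p \in \mbox{spt}_{\theta}(\mu_{t})$ is a limit of points $p^{N}\in \mbox{spt}_{\theta}(\mu_{t}^{N})$; applied to any two points $p,q\in\mbox{spt}_{\theta}(\mu_{t})$, this gives $|p-q|\leq \liminf_{N}D_{\theta}^{N}(t)$, and taking the supremum yields
\[
D_{\theta}(\mu_{t})\leq \liminf_{N\to\infty} D_{\theta}^{N}(t) \leq D_{\theta}(\mu_{0})\,e^{-K\bar\alpha t}.
\]
The delicate point is that the diameter on $\bbt$ must be interpreted consistently at the particle and kinetic levels: the bound $D_{\theta}^{N}(t)<\pi$ together with the initial centering at $\pi$ guarantees that $\mbox{spt}_{\theta}(\mu_{t}^{N})$ never wraps around, so Euclidean and intrinsic diameters coincide throughout, and the same holds in the weak-$*$ limit. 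With this checked, the conclusion for a general (not necessarily smooth) initial measure follows from the mollification/approximation procedure noted at the end of Section~3.
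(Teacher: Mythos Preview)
Your proposal is correct and follows essentially the same route as the paper: approximate $\mu_0$ by empirical measures, apply Proposition~\ref{id-paricle} at the particle level, and pass to the limit via the stability estimate of Theorem~\ref{local-stability}. The only differences are refinements: you take care to pick nodes inside $\mbox{spt}_\theta(\mu_0)$ so that $D_\theta^N(0)\le D_\theta(\mu_0)$ and then exploit the monotonicity of $x\mapsto(\sin x)/x$ to get a uniform-in-$N$ rate, and you invoke lower semicontinuity of the diameter under weak-$*$ convergence rather than asserting convergence of diameters outright; the paper's proof simply writes $D_\theta(\mu_t^N)\to D_\theta(\mu_t)$ and lets $\bar\alpha_N\to\bar\alpha$, which is justified by its mollification scheme but is stated more tersely.
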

\begin{proof}
We use the approximation argument in Theorem \ref{local-stability}
giving a initial particle approximation $\mu^N_0$ defined as in
\eqref{inital_aprox}. Then, it follows from Proposition
\ref{id-paricle} that the approximate measure valued solution
$\mu^N_t \in {\mathcal M}([0, 2\pi) \times \bbr)$ satisfies
\[
D_{\theta}(\mu^N_t) \leq D_{\theta}(\mu^N_0) e^{-K {\bar \alpha_N}
t}, \quad t \geq 0,
\]
where
\[
{\bar \alpha}_N = \frac{\sin
D_{\theta}(\mu^N_0)}{D_{\theta}(\mu^N_0)}.
\]
Since Theorem \ref{local-stability} implies that $
d(\mu_t,\mu^{N}_t)  \rightarrow 0$ as $\quad N \to \infty$.
Hence, $D_{\theta}(\mu^N_t)\to D_{\theta}(\mu_t)$ as $N\to\infty$
and we obtain the desired result.
\end{proof}

\begin{remark}\label{cru-rmk}
Throughout the paper, without loss of generality, we assume that
$\langle \mu_0,\theta \rangle = \pi$ in order to avoid any possible
confusion arising from the periodicity of $\theta$. In fact, if
the oscillators satisfy the assumption in Lemma
\ref{id-mea-val-sol} (or Lemma \ref{lemma-appli-1}), the orthogonal $\theta$-projection of
spt($\mu_t$), $R(t)$ is confined to the interval $(0,2\pi)$ for all
$t \geq 0$ (see Remark \ref{cru-rmk-0}). This property will also be
significantly used in Section 5 (see Lemma \ref{lemma-appli-2}).
\end{remark}

We now show that the measure valued solution to the system \eqref{k-ku}
for identical oscillators will converge to a multiple of the Dirac
measure concentrated on $\left( \theta_c(0), \Omega_c(0) \right)$ in the phase space  $(\theta,
\Omega)$. We set
\[
\mu_{\infty}(d\theta, d\Omega) :=  \delta_{\theta_c(0)}(\theta) \otimes \delta_{\Omega_c(0)}(\Omega).
\]

\begin{theorem}
Suppose that the oscillators are identical, and let $\mu_0 \in
{\mathcal M}([0, 2\pi) \times \bbr)$ be a given initial
probability measure satisfying
\[
\theta_c(0) = \pi, \quad  D_{\theta}(\mu_0) < \pi \quad \mbox{and} \quad K > 0.
\]
Then the measure valued solution $\mu_t$ to \eqref{k-ku} with initial
datum $\mu_0$ satisfies
\[ \displaystyle  \lim_{t \to \infty} d(\mu_t, \mu_{\infty}) = 0, \]
where $d=d(\cdot, \cdot)$ is the bounded Lipschitz distance defined
in Section 3.
\end{theorem}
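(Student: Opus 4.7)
The plan is to control $d(\mu_t,\mu_\infty)$ directly from its definition, exploiting the exponential decay of the phase diameter already established in Lemma 4.1. Given any admissible test function $h \in \mathcal{S}$, since $\mu_\infty$ is a Dirac mass and $\langle \mu_t, 1\rangle = 1$ by Lemma 3.1, I would rewrite
\[
\langle \mu_t, h\rangle - \langle \mu_\infty, h\rangle = \int \bigl(h(\theta,\Omega) - h(\theta_c(0), \Omega_c(0))\bigr)\,\mu_t(d\theta, d\Omega).
\]
The Lipschitz bound $\mathrm{Lip}(h)\le 1$ then yields
\[
|\langle \mu_t, h\rangle - \langle \mu_\infty, h\rangle| \le \int |\theta - \theta_c(0)|\,\mu_t(d\theta, d\Omega) + \int |\Omega - \Omega_c(0)|\,\mu_t(d\theta, d\Omega).
\]

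The $\Omega$-term vanishes in the identical-oscillator regime: since \eqref{k-ku} freezes $\Omega$ along characteristics, the $\Omega$-marginal of $\mu_t$ equals $g$, which here is the Dirac mass $\delta_{\Omega_c(0)}$ (with $\Omega_c(0)=0$ by the mean-zero normalization in \eqref{dist}). Hence $\Omega = \Omega_c(0)$ on $\mathrm{spt}(\mu_t)$ and the second integral is zero.

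For the $\theta$-term, I would invoke the conservation $\theta_c(t) = \theta_c(0) = \pi$ from Lemma 3.1, together with the elementary fact that the mean of a compactly supported probability measure on $\bbr$ lies in the convex hull of its support. Since $D_\theta(\mu_0) < \pi$ guarantees, via Lemma 4.1 and Remark 4.2, that $R(t) \subset (0, 2\pi)$ with diameter strictly less than $\pi$ for all $t \ge 0$, there is no periodic wrapping and the support can be viewed as a subset of $\bbr$. Consequently $|\theta - \pi| \le D_\theta(\mu_t)$ on $\mathrm{spt}(\mu_t)$, and Lemma 4.1 delivers
\[
d(\mu_t, \mu_\infty) \le D_\theta(\mu_t) \le D_\theta(\mu_0)\,e^{-K\bar\alpha t} \longrightarrow 0,
\]
after passing to the supremum over $h \in \mathcal{S}$ and letting $t \to \infty$.

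The one conceptually delicate point is the reduction from $\bbt$ to a flat interval: without the hypothesis $D_\theta(\mu_0) < \pi$ and the centering $\theta_c(0)=\pi$, the first moment of a measure on the torus need not be well-defined and the convex-hull inequality $|\theta-\theta_c(0)|\le D_\theta(\mu_t)$ could fail by a $2\pi$-jump. The phase confinement provided by Lemma 4.1, together with the centering convention of Remark 4.1, is precisely the mechanism that keeps the argument valid uniformly in $t$; aside from this bookkeeping, the proof is essentially a direct consequence of Lemma 4.1 and mass/mean conservation.
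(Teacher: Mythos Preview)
Your proposal is correct and follows essentially the same route as the paper: both arguments bound $|\langle \mu_t,h\rangle-\langle \mu_\infty,h\rangle|$ by $\int |\theta-\pi|\,\bar\mu_t(d\theta)\le D_\theta(\mu_t)$ via the Lipschitz condition on $h$, and then invoke Lemma~4.1. The paper is terser---it restricts at the outset to test functions $h=h(\theta)$ (which is legitimate since in the identical case $\mu_t$ and $\mu_\infty$ share the Dirac $\Omega$-marginal) and leaves the bound $|\theta-\pi|\le D_\theta(\mu_t)$ implicit---whereas you spell out the $\Omega$-term and the convex-hull/no-wrapping justification; but the substance is the same.
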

\begin{proof}
Let $h = h(\theta) \in {\mathcal C}([0,2\pi))$ be any test function
satisfying
\[ \|h\|_{L^{\infty}} \leq 1, \qquad \mbox{Lip}(h) \leq 1. \]
Then $h$ can also be regarded as a test function in ${\mathcal
C}([0,2\pi) \times \bbr)$.
Then we have
\begin{eqnarray*}
&& \Big| \int_{[0,2\pi] \times \bbr} h(\theta)
\mu_t(d\theta, d\Omega) - \int_{[0,2\pi] \times \bbr} h(\theta)
\mu_{\infty}(d\theta, d\Omega) \Big|  \cr
 && \hspace{1cm} = \Big|\int_{0}^{2\pi} h(\theta)
{\bar \mu}_t(d\theta) - h(\pi) \Big| \leq \int_{0}^{2\pi} |\theta-\pi| {\bar \mu}_t(d\theta) \leq D_{\theta}(\mu_0)e^{-K {\bar \alpha} t}.
\end{eqnarray*}
where we used Lemma \ref{id-mea-val-sol}, and ${\bar \mu}_t(d\theta)$ is a
$\theta$-marginal of the measure $\mu_t$, i.e.,
\[ {\bar \mu}_t (d\theta) := \int_{\bbr} \mu_t(d\theta, d\Omega).
\]
This implies that
\[ d(\mu_t, \mu_{\infty}) \leq D_{\theta}(\mu_0)e^{-K{\bar \alpha} t} \to 0, \qquad \mbox{as}~~t \to \infty. \]
\end{proof}
%
%
\section{Stability estimate of the KKE}
\setcounter{equation}{0}
In this section, we present the strict contractivity of measure
valued solutions to the KKE by using the method of optimal mass
transport \cite{C-T,LT,Vi}. The strict contractivity result
generalizes the $L^1$-contraction result for the KM in
\cite{C-H-J-K}.

\subsection{Alternative formulation of the KKE}
In this part, we derive an alternative form of the KKE, which is
more convenient for deriving estimates in terms of the Wasserstein-distance.
First, we study the existence of an invariant set for the KKE.

\begin{lemma}\label{lemma-appli-1}
Suppose that the initial probability measure $\mu_0$ and the
coupling strength $K$ satisfy
\[
0<D_{\theta}(\mu_0) <\pi, \quad  0< D_{\Omega}(\mu_0) <\infty, \quad K > \frac{D_{\Omega}(\mu_0)}{\sin D_{\theta}(\mu_0)}.
\]
Then, there exist $t_0>0$ and $D^{\infty} \in (0, \frac{\pi}{2})$
such that the measure valued solution $\mu$ to \eqref{k-ku} with
initial datum $\mu_0$ satisfies
\[
D_{\theta}(\mu_t) \leq D^{\infty}, \quad t \ge t_0.
\]
\end{lemma}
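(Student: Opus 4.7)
The plan is to lift the particle-level trapping estimate of Lemma \ref{nid-particle} to the measure-valued setting by the same particle-in-cell approximation argument used in Lemma \ref{id-mea-val-sol}. First, I would pick a sequence of initial particle approximations $\mu_0^N$ as in \eqref{inital_aprox} with $d(\mu_0^N,\mu_0)\to 0$, and additionally arrange the Dirac nodes so that the particle supports fill $\mathrm{spt}(\mu_0)$; concretely, one may force four of the nodes to lie within distance $1/N$ of points realizing $D_\theta(\mu_0)$ and $D_\Omega(\mu_0)$. This yields
\[
D_\theta(\mu_0^N)\to D_\theta(\mu_0), \qquad D_\Omega(\mu_0^N)\to D_\Omega(\mu_0),
\]
so that the standing hypothesis $K\sin D_\theta(\mu_0)>D_\Omega(\mu_0)$ transfers to $K\sin D_\theta(\mu_0^N)>D_\Omega(\mu_0^N)$ for all sufficiently large $N$.

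Next, I would apply Lemma \ref{nid-particle} to the $N$-particle Kuramoto system \eqref{pic} whose empirical measure is $\mu_t^N$ defined in \eqref{pic-t}. For each such $N$ this furnishes a trapping time $t_{0,N}>0$ and the bound
\[
\sup_{t\ge t_{0,N}} D_\theta(\mu_t^N) \le D^\infty_N := \arcsin\!\Bigl(\tfrac{D_\Omega(\mu_0^N)}{K}\Bigr) \in \Bigl(0,\tfrac{\pi}{2}\Bigr).
\]
I would then pass to the limit $N\to\infty$. By Theorem \ref{local-stability}, $\mu_t^N\to\mu_t$ in the bounded Lipschitz metric, and because all supports remain in a fixed compact subset of $[0,2\pi)\times\bbr$, this weak-$*$ convergence implies the Kuratowski inclusion $\mathrm{spt}(\mu_t)\subset\liminf_N\mathrm{spt}(\mu_t^N)$, hence $D_\theta(\mu_t)\le\liminf_N D_\theta(\mu_t^N)$. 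Since $D^\infty_N\to D^\infty:=\arcsin(D_\Omega(\mu_0)/K)\in(0,\pi/2)$, setting $t_0:=\limsup_N t_{0,N}$ gives the conclusion.

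The main obstacle is controlling $t_{0,N}$ as $N\to\infty$: the estimate is vacuous if $t_{0,N}\to\infty$. Resolving this requires a quantitative reading of the proof of Lemma \ref{nid-particle} in \cite{C-H-J-K}, showing that the trapping time can be extracted as a continuous function of $K$, $D_\theta(\mu_0^N)$ and $D_\Omega(\mu_0^N)$ alone; typically this follows from comparing the particle diameter $D_\theta(\mu_t^N)$ to an autonomous scalar ODE of the form $\dot D \le -K\sin D + D_\Omega$ and reading off the explicit entrance time into the trapping interval, which depends only on the initial diameters and $K$. Granted this quantitative version, the diameter convergence established in the first step forces $t_{0,N}\to t_0<\infty$, and the passage to the limit closes the argument.
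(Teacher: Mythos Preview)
Your proposal is correct and follows essentially the same route as the paper: approximate $\mu_0$ by empirical measures, invoke Lemma~\ref{nid-particle} at the particle level, and pass to the limit via Theorem~\ref{local-stability}. The paper in fact resolves exactly the obstacle you flag by writing down the explicit trapping time
\[
t_0^N = \frac{D_\theta(\mu_0^N) - D^{\infty,N}}{K\sin D_\theta(\mu_0^N) - D_\Omega(\mu_0^N)},
\]
which is manifestly continuous in $D_\theta(\mu_0^N)$ and $D_\Omega(\mu_0^N)$ and therefore converges as $N\to\infty$; your anticipation that this comes from the scalar comparison $\dot D \le -K\sin D + D_\Omega$ is precisely how this formula arises in \cite{C-H-J-K}.
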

\begin{proof}
We apply the argument similar to that in the proof of Lemma
\ref{id-mea-val-sol}. Let $N>0$ be given. Then we have the
following approximation $\mu_0^N$ for $\mu_0$:
\[
\mu_0^N=\frac{1}{N}\sum_{i=1}^{N}\delta_{\theta_{i0}} \otimes
\delta_{\Omega_{i0}}.
\]
We now solve the Cauchy problem for KM:
\begin{equation*}
\left\{
  \begin{array}{ll}
   \displaystyle \frac{d\theta_i}{dt} = \Omega_i + \frac{K}{N}
\sum_{j=1}^{N}\sin(\theta_j - \theta_i), \quad t > 0,\\
   \displaystyle \frac{d\Omega_i}{dt} = 0.
\end{array}
\right.
\end{equation*}
subject to initial data $(\theta_i(0),\Omega_i(0)) = (\theta_{i0},
\Omega_{i0})$. Theorem \ref{local-stability} implies that
\[
d(\mu_t,\mu^{N}_t)  \rightarrow 0 \quad \mbox{as} \quad N \to
\infty\,,
\]
and thus, $D_{\theta}(\mu^N_t)\to D_{\theta}(\mu_t)$ and
$D_{\Omega}(\mu^N_t)\to D_{\Omega}(\mu_t)$ as $N\to\infty$.
Hence we can take $N$ large enough such that
$D_{\Omega}(\mu^N_0)$ and $D_{\theta}(\mu^N_0)$ satisfies the
conditions of Lemma \ref{nid-particle}. Thus, we find that there
exist $t_0^N > 0$ and $D^{\infty,N}$ such that
\[
D_{\theta}(\mu^N_t) \le D^{\infty,N},\quad t\ge t_0^N, \quad \mbox{for $N$ large enough},
\]
where
\[
t_0^N := \frac{D_{\theta}(\mu_0^N) - D^{\infty,N}}{ K \sin D_{\theta}(\mu_0^N) -  D_{\Omega}(\mu_0^N)}, \quad D^{\infty,N} := \arcsin \big[ \frac{D_{\Omega}(\mu_0^N)}{K}\big] \in \big( 0, \frac{\pi}{2}\big).
\]
We now let $N \to \infty$ to obtain the
desired result.
\end{proof}

In the remainder of this section, from Remark \ref{cru-rmk}, we
assume that
\begin{equation}\label{m-assum-1}
R(t) \subset \big(0, 2\pi\big) \quad \mbox{and} \quad t \geq 0.
\end{equation}

Under this assumption the solution is given by a smooth particle
density function $f(\theta, \Omega, t)$ in $L^1$ for all $t\geq
0$. For a given $\Omega$, we consider a one-particle density
function $f$ as a function of $\theta$. Then we define the pseudo
cumulative distribution function of $f$:
\[
F(\theta, \Omega, t) := \int_{0}^{\theta} f(\theta_*,\Omega,t) d\tilde{\theta}, \qquad (\theta, \Omega, t) \in [0,2\pi) \times \bbr \times \bbr_+,
\]
and a pseudo-inverse $\phi$ of $F(\cdot, \Omega, t)$ as a function
of $\theta$:
\begin{equation*}
\phi(\eta, \Omega,t) := \inf \{\theta :
F(\theta, \Omega, t) > \eta\},\quad \eta \in [0,g(\Omega)].
\end{equation*}
As long as there is no confusion, we use the notation $F^{-1}(\eta,
\Omega, t) = \phi$ as the pseudo inverse of $F$ as $\theta$-function. Then it
is easy to see that
\begin{equation} \label{inverse-rel}
F(\phi(\eta, \Omega, t), \Omega, t) = \eta.
\end{equation}

\begin{lemma}\label{lemma-appli-2}
Let $\mu$ be a measure-valued solution to
\eqref{k-ku}-\eqref{ini}, and let $\phi$ be the pseudo-inverse
function of the cumulative distribution function $F$. Then we have
\begin{eqnarray*}
&& (i)~\max\{\theta~|~\theta\in R(t) \}=\max_{\Omega\in
\mbox{spt}(g)}\phi(g(\Omega),\Omega,t). \cr
&& (ii)~\min\{\theta~|~\theta\in R(t) \}=\min_{\Omega\in \mbox{spt}(g)}\phi(0,\Omega,t). \cr
&& (iii)~\max_{\Omega\in \mbox{spt}(g)}\phi(g(\Omega),\Omega,t)-\min_{\Omega\in \mbox{spt}(g)}\phi(0,\Omega,t)\le D^{\infty},\quad t\ge t_0.
\end{eqnarray*}
\end{lemma}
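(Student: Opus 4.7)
The plan is to identify the endpoints of the $\theta$-projection $R(t)$ slice-by-slice in $\Omega$ using the cumulative distribution $F$, and then to obtain (iii) by feeding those identifications into Lemma~\ref{lemma-appli-1}. Throughout I work with the smooth density representation $f(\theta,\Omega,t)$ of $\mu_t$, which is permissible via the regularization procedure spelled out after Theorem~\ref{local-stability}, together with the standing assumption \eqref{m-assum-1} that ensures everything takes place inside $(0,2\pi)$.

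First I would prove (i). For any fixed $\Omega\in\mbox{spt}(g)$, the map $\theta\mapsto F(\theta,\Omega,t)$ is continuous and nondecreasing on $[0,2\pi]$ with $F(0,\Omega,t)=0$ and, by the marginal condition in \eqref{ini}, $F(2\pi,\Omega,t)=g(\Omega)$. The rightmost point of the $\theta$-support of $f(\cdot,\Omega,t)$ is therefore the leftmost $\theta^{\ast}\in[0,2\pi)$ at which $F$ first attains the value $g(\Omega)$, since $F$ stays constant on any interval on which $f(\cdot,\Omega,t)$ vanishes beyond $\theta^{\ast}$. Interpreting $\phi(g(\Omega),\Omega,t)$ as this $\theta^{\ast}$, i.e.\ as the left-continuous extension $\lim_{\eta\uparrow g(\Omega)}\phi(\eta,\Omega,t)$, this yields the slice identity
\[
\sup\{\theta : (\theta,\Omega)\in\mbox{spt}(\mu_t)\}=\phi(g(\Omega),\Omega,t).
\]
Since $R(t)=\overline{\bigcup_{\Omega\in\mbox{spt}(g)}\mbox{spt}_{\theta}f(\cdot,\Omega,t)}$ and $\mbox{spt}(g)$ is compact, taking the supremum over $\Omega\in\mbox{spt}(g)$ produces (i). Part (ii) is symmetric: by the very definition of the pseudo-inverse, $\phi(0,\Omega,t)=\inf\{\theta:F(\theta,\Omega,t)>0\}$ is the left endpoint of the $\theta$-slice support of $f(\cdot,\Omega,t)$, and taking the infimum over $\Omega\in\mbox{spt}(g)$ returns the minimum of $R(t)$.

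For (iii), I would simply combine (i)--(ii) with Lemma~\ref{lemma-appli-1}: the standing hypotheses in Section~5 are precisely those of that lemma, so $D_\theta(\mu_t)\leq D^\infty$ for every $t\geq t_0$. Using the identifications just obtained,
\[
D_\theta(\mu_t)=\mbox{diam}(R(t))=\max_{\Omega\in\mbox{spt}(g)}\phi(g(\Omega),\Omega,t)-\min_{\Omega\in\mbox{spt}(g)}\phi(0,\Omega,t),
\]
and the claimed bound is immediate.

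The only genuine subtlety is notational rather than analytic: one must handle the edge case $\eta=g(\Omega)$, for which the defining set $\{\theta:F(\theta,\Omega,t)>g(\Omega)\}$ is empty under the literal formula for $\phi$. I would resolve this by adopting the left-continuous extension $\phi(g(\Omega),\Omega,t):=\lim_{\eta\uparrow g(\Omega)}\phi(\eta,\Omega,t)$, which by construction equals the right endpoint of the $\theta$-slice support, agrees with \eqref{inverse-rel} whenever $\phi$ is a genuine inverse, and makes the identities in (i) and (ii) hold verbatim.
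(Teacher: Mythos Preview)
Your argument is correct and follows essentially the same route as the paper: identify the endpoints of $R(t)$ slice-by-slice as $\phi(g(\Omega),\Omega,t)$ and $\phi(0,\Omega,t)$ via the definition of the pseudo-inverse, then take extrema over $\Omega\in\mbox{spt}(g)$, and deduce (iii) from (i)--(ii) together with Lemma~\ref{lemma-appli-1}. You in fact add a detail the paper leaves implicit, namely the need to interpret $\phi(g(\Omega),\Omega,t)$ as the left-continuous extension at the endpoint $\eta=g(\Omega)$ so that the defining infimum is not taken over an empty set.
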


\begin{proof}
Since the estimate for $(ii)$ is similar to that of $(i)$ and the
estimate for (iii) follows from the estimates (i) and (ii), we
only provide the proof for the estimate (i). For notational
simplicity, we set
\[ \theta_M := \max\{\theta~|~\theta\in R(t) \}.  \]
Then, by definition of $\mu_t$, we have
\[
\theta_M = \max\{\theta~|~\theta\in
\mbox{spt}_{\theta}(f(\theta,\Omega,t)) \mbox{ and }~\Omega\in
\mbox{spt}(g)\},
\]
where $\mbox{spt}_{\theta}(f(\theta,\Omega,t))$ is the
$\theta$-projection of  $\mbox{spt}(f(\theta,\Omega,t))$.
This yields
\[
\theta_M = \max\{\phi(g(\Omega),\Omega,t) \mbox{ such that
$\Omega\in \mbox{spt}(g)$} \},
\]
by definition of  the pseudo-inverse function. This completes the proof.
\end{proof}

Next, we derive an integro-differential equation for the pseudo
inverse $\phi$. It follows from \eqref{m-assum-1} that the smooth
solution $f(\theta, \Omega, t)$ to \eqref{k-ku}-\eqref{ini}
satisfies
\begin{equation*} \label{BC}
f(0, \Omega, t) = 0, \quad \Omega \in \bbr,~t \geq 0.
\end{equation*}
We differentiate the relation \eqref{inverse-rel} in $t$ and use
$\partial_{\theta} F = f$ to get
\[ \partial_t F(\theta, \Omega, t) \Big|_{\theta =
\phi(\eta, \Omega, t)} + f(\theta, \Omega, t)\Big|_{\theta =
\phi(\eta, \Omega, t)} \partial_t \phi(\eta, \Omega,
t) = 0. \]
This yields
\begin{align*}
\partial_t \phi(\eta, \Omega,t) & = -\frac{1}{f(\theta, \Omega,
t)} \partial_t F(\theta, \Omega,t) \Big|_{\theta=\phi(\eta,
\Omega,t)} \cr & = \frac{1}{f(\theta, \Omega, t)}\Big|_{\theta =
\phi(\eta, \Omega, t)} \times (\omega[f] f)(\cdot, \Omega, t)
\Big|_{\theta = 0}^{\theta = \phi(\eta, \Omega, t)} \cr & =
\Omega+ K\int_{\bbr} \int_0^{2\pi} \sin(\theta_*- \phi(\eta,
\Omega,t))f(\theta_*,\Omega_*, t) d\theta_*  d\Omega_* \quad
\mbox{using \eqref{BC}} \cr & = \Omega + K \int_{\bbr} \int_0^{g(\Omega_*)}
\sin (\phi(\eta_*, \Omega_*,t)-\phi(\eta, \Omega,t))
d\eta_* d\Omega_*,
\end{align*}
where we used $\theta_* = \phi(\eta_*, \Omega_*, t)$ and relation
\eqref{inverse-rel} to see $f(\theta_*, \Omega_*,t) d\theta_* =
d\eta_*$. Hence, the pseudo-inverse $\phi$ satisfies the following
integro-differential equation:
\begin{equation} \label{phi-t}
\partial_t \phi = \Omega + K\int_{\bbr}  \int_0^{g(\Omega_*)} \sin (\phi_*-\phi) d\eta_* d\Omega_* .
\end{equation}
where we used abbreviated notations:
\[ \phi_* := \phi(\eta_*,\Omega_*,t), \quad \phi := \phi(\eta,\Omega,t). \]
The following results is a simple consequence of the change of
variables and Lemma \ref{conservation-lem}.

\begin{lemma}\label{contract-lemma1}
Let $\mu_t$ be a measure valued solution to \eqref{k-ku} -
\eqref{ini} with an associated pseudo-inverse function $\phi$.
Then, we have
\[ \int_{\bbr} \int_0^{g(\Omega)}  \phi  d\eta d\Omega = \int_{\bbr} \int_{0}^{2\pi}  \theta
\mu_t(d\theta, d\Omega), \quad \frac{d}{dt} \int_{\bbr} \int_0^{g(\Omega)} \phi d\eta d\Omega = 0. \]
\end{lemma}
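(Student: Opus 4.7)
The plan is to handle the two identities separately and reduce each one to an elementary calculation. Throughout, I will work on the smooth regularizations $f(\theta,\Omega,t)$ of $\mu_t$ provided by the approximation procedure after Theorem \ref{local-stability}, so that $F$, $\phi$, and all the derivatives that appear are genuinely classical. The stated identities involve only quantities which are stable under weak-$*$ limits, so the claims then transfer to the limiting measure valued solution.

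For the first identity, I would fix $\Omega$ and perform the one-dimensional change of variables $\theta = \phi(\eta,\Omega,t)$. From \eqref{inverse-rel} together with $\partial_\theta F = f$, differentiating in $\eta$ yields $f(\phi(\eta,\Omega,t),\Omega,t)\,\partial_\eta \phi(\eta,\Omega,t) = 1$, which is precisely $d\eta = f(\theta,\Omega,t)\,d\theta$. The range $\eta\in[0,g(\Omega)]$ matches $\theta\in[0,2\pi)$ because $F(2\pi,\Omega,t)=\int_0^{2\pi} f(\theta_*,\Omega,t)\,d\theta_* = g(\Omega)$: indeed, integrating \eqref{k-ku} in $\theta$ over $\bbt$ shows that the $\Omega$-marginal of $f$ is constant in time, and the initial condition \eqref{ini} pins it to $g(\Omega)$. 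Hence
\[
\int_0^{g(\Omega)} \phi(\eta,\Omega,t)\, d\eta = \int_0^{2\pi} \theta\, f(\theta,\Omega,t)\, d\theta,
\]
and integrating in $\Omega$ against the compactly supported $g$ yields the first identity.

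For the second identity there are two equivalent routes, and I would mention both briefly. The quickest one is to combine the first identity just proved with Lemma \ref{conservation-lem}, which says $\langle \mu_t,\theta\rangle = \langle \mu_0,\theta\rangle$ for all $t\ge 0$; this immediately gives that $\int_\bbr\int_0^{g(\Omega)}\phi\,d\eta\,d\Omega$ is constant in time. The direct route is to differentiate under the integral sign using the evolution equation \eqref{phi-t}:
\[
\frac{d}{dt}\int_\bbr\int_0^{g(\Omega)} \phi\, d\eta\, d\Omega = \int_\bbr\int_0^{g(\Omega)} \Omega\, d\eta\, d\Omega + K\int_\bbr\int_0^{g(\Omega)}\int_\bbr\int_0^{g(\Omega_*)} \sin(\phi_* - \phi)\, d\eta_*\, d\Omega_*\, d\eta\, d\Omega.
\]
The first term equals $\int_\bbr \Omega\, g(\Omega)\,d\Omega = 0$ by the normalization \eqref{dist}. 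The second term vanishes by the antisymmetry of the integrand under the swap $(\eta,\Omega)\leftrightarrow (\eta_*,\Omega_*)$, exactly as in the computation at the end of Lemma \ref{conservation-lem}.

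There is no real obstacle here; the only minor point that needs care is the justification of differentiation under the integral sign and the change of variables, which is why I would emphasize working with the smooth approximants first and passing to the weak-$*$ limit at the end, as outlined in the paragraph following Theorem \ref{local-stability}.
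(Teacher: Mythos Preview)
Your proof is correct and follows exactly the approach the paper indicates: the first identity is the change of variables $d\eta = f(\theta,\Omega,t)\,d\theta$ coming from \eqref{inverse-rel}, and the second identity then follows from Lemma \ref{conservation-lem}. The paper's own proof is the single sentence ``a simple consequence of the change of variables and Lemma \ref{conservation-lem}'', so your write-up is just a fleshed-out version of this (your alternative direct route via \eqref{phi-t} is also fine but not needed).
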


%
%
\subsection{Strict contractivity in the Wasserstein distance}
In this part, we present the proof of the strict contraction property of the KKE. \newline

For the one-dimensional case, it is well known \cite{C-T, Vi} that
the Wasserstein $p$-distance $W_p(\mu_1, \mu_2)$ between two
measures $\mu_1$ and $\mu_2$ is equivalent to the $L^p$-distance
between the corresponding pseudo-inverse functions $\phi_1$ and
$\phi_2$ respectively. Thus, we set
\[
W_p(\mu_1,\mu_2)(\Omega, t):=
\|\phi_1(\cdot,\Omega,t)-\phi_2(\cdot,\Omega,t)\|_{L^p(0,g(\Omega))},
\quad 1\leq p \le \infty.
\]
Since $W_p(\mu_1,\mu_2)$ depends on $\Omega$, we introduce a
modified metric on the phase-space $(\theta,\Omega)$:
\begin{equation*}
\widetilde{W}_p(\mu_1,\mu_2)(t) := ||W_p(\mu_1,\mu_2)(\cdot,
t)||_{L^p(\bbr)}, \quad 1\leq p \leq \infty.
\end{equation*}
Below, we assume that the density function $g(\Omega)$ has compact
support. Then, it is easy to see that
$\widetilde{W}_p(\mu_1,\mu_2)$ is a metric that satisfies
\begin{equation}\label{limit}
\lim_{p \rightarrow \infty} \widetilde{W}_p(\mu_1,\mu_2)(t) =
\widetilde{W}_{\infty}(\mu_1,\mu_2)(t), \qquad t \geq 0.
\end{equation}
Recall that the $\mbox{sgn}$ function is defined by
\[ \mbox{sgn}(x) = \left\{
                     \begin{array}{ll}
                       1, & x > 0,  \\
                       0, & x = 0,  \\
                       -1, & x < 0.
                     \end{array}
                   \right.
\]
\begin{lemma}\label{lemma-cal}
Let $\Phi$ be a measurable function defined on $[0,g(\Omega)]\times \bbr$ satisfying
\[
 |\Phi(\eta,\Omega)| < \frac{\pi}{2} \quad \mbox{and} \quad \int_{\bbr} \int_0^{g(\Omega)} \Phi(\eta,\Omega) d\eta d\Omega =0.
\]
Then for $1\leq p < \infty$, we have
\begin{align*}
\begin{aligned}
& \int_{\bbr} \int_{\bbr} \int_0^{g(\Omega)}\int_0^{g(\Omega_*)}  \Big[|\Phi(\eta,\Omega)|^{p-1} \mbox{sgn}(\Phi(\eta,\Omega))-|\Phi(\eta_*,\Omega_*)|^{p-1} \mbox{sgn}(\Phi(\eta_*,\Omega_*)) \Big ] \cr
& \hspace{1cm} \times \sin
(\frac{\Phi(\eta_*,\Omega_*)-\Phi(\eta,\Omega)}{2})  d\eta_* d\eta d\Omega_* d\Omega \leq -\frac{2}{\pi}\int_{\bbr} \int_0^{g(\Omega)} |\Phi(\eta)|^p d\eta d\Omega.
\end{aligned}
\end{align*}
\end{lemma}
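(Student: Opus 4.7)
Let me write $a=\Phi(\eta,\Omega)$, $b=\Phi(\eta_*,\Omega_*)$, denote by $dm$ the measure $d\eta\,d\Omega$ restricted to the appropriate strip $\{0\leq \eta\leq g(\Omega)\}$, and set $\psi(x):=|x|^{p-1}\mathrm{sgn}(x)$, which is monotone non-decreasing. The hypothesis $|a|,|b|<\pi/2$ forces $|(b-a)/2|<\pi/2$, so $\sin((b-a)/2)$ has the same sign as $b-a$ while $\psi(a)-\psi(b)$ has the same sign as $a-b$; hence the integrand of the left-hand side is pointwise $\leq 0$. The goal is to quantify how negative it is, matching the target $-\frac{2}{\pi}\int|\Phi|^p\,dm$.

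The main analytic input is the elementary concavity bound
\[
|\sin y|\geq \tfrac{2|y|}{\pi}\quad\text{for }|y|\leq \tfrac{\pi}{2},
\]
which, applied with $y=(b-a)/2$, yields the pointwise inequality
\[
\bigl[\psi(a)-\psi(b)\bigr]\sin\!\tfrac{b-a}{2}\;\leq\;-\tfrac{1}{\pi}\bigl[\psi(a)-\psi(b)\bigr](a-b),
\]
since both $[\psi(a)-\psi(b)](a-b)\geq 0$ and $-[\psi(a)-\psi(b)]\sin((b-a)/2)\geq 0$. Integrating this bound over the product measure $dm(a)\,dm(b)$ reduces the problem to evaluating
\[
\int\!\!\int \bigl[\psi(a)-\psi(b)\bigr](a-b)\,dm(a)\,dm(b).
\]

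This double integral is computed by symmetrization and the zero-mean hypothesis. Expanding the bracket and using the symmetry $a\leftrightarrow b$, one gets
\[
\int\!\!\int\bigl[\psi(a)-\psi(b)\bigr](a-b)\,dm(a)\,dm(b) = 2\!\int\!\!\int \psi(a)a\,dm(a)dm(b)-2\!\int\!\!\int\psi(a)\,b\,dm(a)dm(b).
\]
The first term factors as $2\bigl(\int dm\bigr)\bigl(\int\psi(a)\,a\,dm(a)\bigr)=2\int|\Phi|^p\,dm$, using $\psi(x)x=|x|^p$ together with the fact that $g$ is a probability density, so $\int dm=\int_{\bbr}g(\Omega)\,d\Omega=1$. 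The second term factors as $2\bigl(\int\psi(\Phi)\,dm\bigr)\bigl(\int\Phi\,dm\bigr)$, which vanishes by the zero-mean assumption on $\Phi$. Combining the pointwise sine bound with this identity gives precisely the stated inequality.

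The main obstacle I anticipate is the careful bookkeeping of signs in the pointwise bound (verifying that the monotonicity of $\psi$ together with the range $|(b-a)/2|<\pi/2$ really does collapse both factors to have a controlled sign), and being explicit that the total mass factor equals $1$; both are routine once set up, but a slip in either would lose the exact constant $2/\pi$. Everything else is symmetrization and Fubini.
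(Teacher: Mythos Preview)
Your proof is correct and actually cleaner than the paper's. The paper decomposes the domain into the three sign sets $\mathcal{P}=\{\Phi>0\}$, $\mathcal{Z}=\{\Phi=0\}$, $\mathcal{N}=\{\Phi<0\}$ and works through all nine product cases $A\times B$ separately, applying the Jordan inequality $\sin x\geq \tfrac{2}{\pi}x$ in each, then reassembles the pieces using $\mathcal{L}(\mathcal{P})+\mathcal{L}(\mathcal{Z})+\mathcal{L}(\mathcal{N})=1$ and the identity $\int_{\mathcal{P}}|\Phi|=\int_{\mathcal{N}}|\Phi|$ coming from the zero-mean hypothesis.

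Your argument replaces this case analysis by a single observation: the monotonicity of $\psi(x)=|x|^{p-1}\mathrm{sgn}(x)$ forces $[\psi(a)-\psi(b)]\sin\frac{b-a}{2}\leq 0$ and $[\psi(a)-\psi(b)](a-b)\geq 0$, so the Jordan inequality (valid on the whole range since $|(b-a)/2|<\pi/2$) gives the uniform pointwise bound
\[
[\psi(a)-\psi(b)]\sin\tfrac{b-a}{2}\leq -\tfrac{1}{\pi}[\psi(a)-\psi(b)](a-b).
\]
After integrating, the symmetrization and zero-mean cancellation you describe are exactly what the paper's nine cases achieve in aggregate. Both proofs use the same analytic ingredient and the same structural hypotheses (total mass one, $\int\Phi=0$); yours just packages the sign bookkeeping into the monotonicity of $\psi$ rather than enumerating cases, which is shorter and makes the role of the zero-mean assumption more transparent.
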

\begin{proof} For notational simplicity, we set
\begin{eqnarray*}
&& \Phi := \Phi(\eta,\Omega),~~\Phi_* := \Phi(\eta_*,\Omega_*), \quad \mbox{and} \cr
&& \Delta(\eta, \eta_*,\Omega,\Omega_*) := \Big[|\Phi|^{p-1}\mbox{sgn}(\Phi)-
|\Phi_*|^{p-1}\mbox{sgn}(\Phi_*)\Big ] \sin \Big(\frac{\Phi_*-\Phi}{2} \Big),
\end{eqnarray*}
and we decompose the domain $[0,g(\Omega)] \times \bbr$ as the disjoint union of three subsets:
\[ {\mathcal P}:=\{(\eta,\Omega)~|~\Phi(\eta, \Omega) > 0 \}, \quad {\mathcal Z}:=\{(\eta,\Omega)~|~\Phi(\eta, \Omega) = 0 \}, \quad  {\mathcal N}:=\{(\eta,\Omega)~|~ \Phi(\eta, \Omega) < 0 \}.\]
Then it follows from the condition $\int_{\bbr} \int_0^{g(\Omega)} \Phi d\eta d\Omega= 0$ that
\begin{equation} \label{zero}
 \int_{{\mathcal P}} |\Phi| d\eta d\Omega=  \int_{{\mathcal N }} |\Phi| d\eta d\Omega.
 \end{equation}
We use $[0,g(\Omega)] \times \bbr =  {\mathcal P} \cup  {\mathcal Z} \cup  {\mathcal N}$ to obtain
\begin{align*}
\begin{aligned}
& \int_{\bbr} \int_{\bbr} \int_0^{g(\Omega_*)}\int_0^{g(\Omega)} \Delta(\eta, \eta_*,\Omega,\Omega_*) d\eta d\eta_* d\Omega d\Omega_*\\
& \hspace{0.5cm} = \Big( \underbrace{\int_{{\mathcal P} \times {\mathcal Z}} + \cdots  \int_{{\mathcal N} \times {\mathcal P}}}_{\mbox{distinct signs}} +
\underbrace{\int_{{\mathcal P} \times {\mathcal P}} + \int_{{\mathcal N} \times {\mathcal N}} + \int_{{\mathcal Z} \times {\mathcal Z}}}_{\mbox{same signs}}  \Big)
\Delta(\eta, \eta_*,\Omega,\Omega_*) d\eta d\eta_* d\Omega d\Omega_*
\end{aligned}
\end{align*}
We now consider the following sub-integrals separately.
\[ I(A,B) := \int_{A \times B} \Delta(\eta, \eta_*,\Omega,\Omega_*) d\eta d\eta_* d\Omega d\Omega_*, \quad A, B \in \{{\mathcal P}, {\mathcal Z}, {\mathcal N} \}. \]
We claim the following:
\begin{center}
\begin{tabular}{||c|c|c|c||}
\hline \hline Case   & A & B & $I(A, B) \leq$ \\
\hline \hline I & ${\mathcal P}$ & ${\mathcal Z}$ & $-\frac{{\mathcal L}({\mathcal Z})}
{\pi}  \int_{{\mathcal P}}|\Phi_*|^p d\eta_*d\Omega_*$ \\
\hline II  & ${\mathcal N}$ & ${\mathcal Z}$ & $-\frac{{\mathcal L}({\mathcal Z})}
{\pi}  \int_{{\mathcal N}}|\Phi_*|^p d\eta_*d\Omega_*$ \\
\hline III & ${\mathcal Z}$ & ${\mathcal P}$ & $-\frac{{\mathcal L}({\mathcal Z})}
{\pi}  \int_{{\mathcal P}}|\Phi|^p d\eta d\Omega$ \\
\hline IV & ${\mathcal Z}$ & ${\mathcal N}$ & $-\frac{{\mathcal L}({\mathcal Z})}
{\pi}  \int_{{\mathcal N}}|\Phi|^p d\eta d\Omega$  \\
\hline V & ${\mathcal P}$ & ${\mathcal N}$ & $ -\frac{1}{\pi}\Big [ {\mathcal L}({\mathcal P}) \int_{\mathcal N} |\Phi|^p d\eta d\Omega
+ {\mathcal L}({\mathcal N})
\int_{\mathcal P} |\Phi_*|^p d\eta_* d\Omega_*$ \\
   &   &  &$+ \int_{\mathcal N} |\Phi|^{p-1}d\eta d\Omega \int_{\mathcal P} |\Phi_*|d\eta_* d\Omega_* + \int_{\mathcal P} |\Phi_*|^{p-1}d\eta_* d\Omega_*
   \int_{\mathcal N} |\Phi|d\eta d\Omega \Big ] $ \\
\hline VI & ${\mathcal N}$ & ${\mathcal P}$ & $ -\frac{1}{\pi}\Big [ {\mathcal L}({\mathcal N}) \int_{\mathcal P} |\Phi|^p d\eta d\Omega
+ {\mathcal L}({\mathcal P})
\int_{\mathcal N} |\Phi_*|^p d\eta_* d\Omega_*$ \\
   &   &  &$+ \int_{\mathcal P} |\Phi|^{p-1}d\eta d\Omega \int_{\mathcal N} |\Phi_*|d\eta_* d\Omega_* + \int_{\mathcal N} |\Phi_*|^{p-1}d\eta_* d\Omega_*
   \int_{\mathcal P} |\Phi|d\eta d\Omega \Big ] $ \\
\hline VII& ${\mathcal P}$ & ${\mathcal P}$ & $ -\frac{1}{\pi}\Big [ {\mathcal L}({\mathcal P}) \int_{\mathcal P} |\Phi|^p d\eta d\Omega
+ {\mathcal L}({\mathcal P})
\int_{\mathcal P} |\Phi_*|^p d\eta_* d\Omega_*$ \\
   &   &  &$- \int_{\mathcal P} |\Phi|^{p-1}d\eta d\Omega \int_{\mathcal P} |\Phi_*|d\eta_* d\Omega_* - \int_{\mathcal P} |\Phi_*|^{p-1}
   d\eta_* d\Omega_* \int_{\mathcal P} |\Phi|d\eta d\Omega \Big ] $ \\
\hline VIII& ${\mathcal N}$ & ${\mathcal N}$ & $ -\frac{1}{\pi}\Big [ {\mathcal L}({\mathcal N}) \int_{\mathcal N} |\Phi|^p d\eta d\Omega
+ {\mathcal L}({\mathcal N})
\int_{\mathcal N} |\Phi_*|^p d\eta_* d\Omega_*$ \\
   &   &  &$- \int_{\mathcal N} |\Phi|^{p-1}d\eta d\Omega \int_{\mathcal N} |\Phi_*|d\eta_* d\Omega_* - \int_{\mathcal N} |\Phi_*|^{p-1}
   d\eta_* d\Omega_* \int_{\mathcal N} |\Phi|d\eta d\Omega \Big ] $ \\
\hline IX& ${\mathcal Z}$ & ${\mathcal Z}$ & $0$ \\
\hline \hline
\end{tabular}
\end{center}
where ${\mathcal L}(A)$ denotes the Lebesgue measure of the set $A$:
\[
\mathcal{L}(A) := \int_{A} 1 d\eta d\Omega.
\]
We also note that
\[
\mathcal{L}(\mathcal{P})+\mathcal{L}(\mathcal{Z})+\mathcal{L}(\mathcal{N}) = \int_{\bbr} \int_{0}^{g(\Omega)} 1 d\eta d\Omega = \int_{\bbr} g(\Omega) d\Omega = 1.
\]
\noindent {\bf Case I}: In this case, we use the definition of $\Delta(\eta, \eta_*,\Omega,\Omega_*)$ and the inequality
\[ \sin x \geq \frac{2}{\pi}x, \qquad \mbox{for}~~ x \in \Big [0, \frac{\pi}{2} \Big], \]
to determine that
\[ \Delta(\eta, \eta_*,\Omega,\Omega_*) = -|\Phi_*|^{p-1}\sin \frac{|\Phi_*|}{2} \leq  -\frac{1}{\pi} |\Phi_*|^p.  \]
This yields
\[ I({\mathcal P}, {\mathcal Z}) \leq -\frac{1}{\pi} \int_{{\mathcal P} \times {\mathcal Z}} |\Phi_*|^p d\eta d\eta_* d\Omega d\Omega_*=
-\frac{{\mathcal L}({\mathcal Z})}
{\pi}  \int_{{\mathcal P}}|\Phi_*|^p d\eta_* d\Omega_*. \]

\noindent {\bf Case II} - \noindent {\bf Case IV}: The estimates are basically the same as in Case I. Hence, we omit their estimates. \newline

\noindent {\bf Case V}: In this case, we have
\begin{eqnarray*}
 \Delta(\eta, \eta_*,\Omega,\Omega_*) &=& -(|\Phi|^{p-1}+|\Phi_*|^{p-1})\sin \left(\frac{|\Phi_*| + |\Phi|}{2} \right)   \cr
                             &\leq& -\frac{1}{\pi} \Big( |\Phi|^{p}+|\Phi_*|^{p} + |\Phi|^{p-1}|\Phi_*| +
                             |\Phi_*|^{p-1}|\Phi|  \Big).
\end{eqnarray*}
This yields the desired result.

\noindent {\bf Case VI}: Once we interchange ${\mathcal P} \longleftrightarrow {\mathcal N}$, the same estimate holds. \newline

\noindent {\bf Case VII}: In this case, we need to consider two subcases:
\[ \mbox{Either}~~\Phi > \Phi_* > 0 \quad \mbox{or} \quad \Phi_* \geq \Phi > 0. \]
By considering each case, we have
\begin{eqnarray*}
 \Delta(\eta, \eta_*,\Omega,\Omega_*) &=& \left(|\Phi|^{p-1}-|\Phi_*|^{p-1}\right) \sin \Big(\frac{\Phi_*-\Phi}{2}\Big) \cr
                             &\leq& \frac{1}{\pi} \left(|\Phi|^{p-1}-|\Phi_*|^{p-1}\right)\left( |\Phi_*|-|\Phi|\right) \cr
                             &=&-\frac{1}{\pi} \Big( |\Phi|^{p}+|\Phi_*|^{p} - |\Phi|^{p-1}|\Phi_*| -
                             |\Phi_*|^{p-1}|\Phi|  \Big).
\end{eqnarray*}
This yields the desired result. \newline

\noindent {\bf Case VIII}:: The estimate is exactly the same as in Case VII. Hence we omit its estimate. \newline

\noindent {\bf Case IX}:: The estimate is trivial. \newline

We now add all cases and use \eqref{zero} to find
\begin{align*}
\begin{aligned}
&\int_{\bbr} \int_{\bbr} \int_0^{g(\Omega_*)}\int_0^{g(\Omega)} \Delta(\eta, \eta_*,\Omega,\Omega_*) d\eta d\eta_* d\Omega d\Omega_* \cr
& \qquad \qquad \qquad \leq -\frac{2}{\pi} \Big( {\mathcal L}({\mathcal P}) + {\mathcal L}({\mathcal Z}) + {\mathcal L}({\mathcal N})
       \Big) \int_{\bbr} \int_0^{g(\Omega)} |\Phi|^p d\eta d\Omega \cr
& \qquad \qquad \qquad = -\frac{2}{\pi} \int_{\bbr} \int_0^{g(\Omega)} |\Phi|^p d\eta d\Omega.
\end{aligned}
\end{align*}
\end{proof}

\begin{theorem}\label{thm-wp}
Suppose that two initial measures $\mu_0,~\nu_0 \in {\mathcal M}([0, 2\pi) \times \bbr)$ and $K$ satisfy
\begin{eqnarray*}
&& (i)~0<D_{\theta}({\nu_0}) \le D_{\theta}({\mu_0}) <\pi,  \quad \int_{[0,2\pi] \times \bbr} \theta \mu_0( d\theta, d\Omega) = \int_{[0,2\pi] \times \bbr}
\theta \nu_0( d\theta, d\Omega) =\pi. \cr
&& (ii)~K > D_{\Omega}(\mu_0) \max \Big \{ \frac{1}{\sin D_{\theta}(\mu_0)},
\frac{1}{\sin D_{\theta}(\nu_0)} \Big \},
\end{eqnarray*}
and let $\mu_t$ and $\nu_t$ be two measure valued solutions to
\eqref{k-ku} - \eqref{ini} corresponding to initial data $\mu_0$
and $\nu_0$, respectively. Then, there exists $t_0 > 0$ such that
\[\widetilde{W}_p(\mu_t,\nu_t) \leq \exp \Big[ -\frac{2K \cos D^\infty}{\pi} (t-t_0) \Big ] \widetilde{W}_p(\mu_{t_0},\nu_{t_0}),\quad t>t_0,~~1 \leq p \leq \infty. \]
\end{theorem}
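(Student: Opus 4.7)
The plan is to argue at the level of pseudo-inverses and reduce everything to Lemma \ref{lemma-cal}. Since the two initial measures share the $\Omega$-marginal $g$, I have $D_{\Omega}(\mu_0) = D_{\Omega}(\nu_0)$, so applying Lemma \ref{lemma-appli-1} to each of $\mu_t, \nu_t$ produces a common time $t_0 > 0$ and a common value $D^{\infty} = \arcsin(D_{\Omega}(\mu_0)/K) \in (0, \pi/2)$ with $D_{\theta}(\mu_t), D_{\theta}(\nu_t) \le D^{\infty}$ for all $t \ge t_0$. Let $\phi$ and $\psi$ denote the pseudo-inverses of the cumulative distribution functions of $\mu_t$ and $\nu_t$ respectively and set $\Phi := \phi - \psi$. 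Subtracting two copies of \eqref{phi-t} gives
\begin{equation*}
\partial_t \Phi = K \int_{\bbr} \int_0^{g(\Omega_*)} \bigl[\sin(\phi_* - \phi) - \sin(\psi_* - \psi)\bigr] \, d\eta_* \, d\Omega_*,
\end{equation*}
while Lemma \ref{contract-lemma1} together with $\langle \mu_0, \theta\rangle = \langle \nu_0, \theta\rangle = \pi$ yields the zero-mean condition $\int_{\bbr}\int_0^{g(\Omega)} \Phi \, d\eta \, d\Omega = 0$ that Lemma \ref{lemma-cal} requires.

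Next, for $1 \le p < \infty$, I would multiply the equation for $\Phi$ by $|\Phi|^{p-1}\mbox{sgn}(\Phi)$, integrate in $(\eta, \Omega)$, and symmetrize in the swap $(\eta,\Omega) \leftrightarrow (\eta_*, \Omega_*)$, exploiting that $\sin(\phi_* - \phi) - \sin(\psi_* - \psi)$ is antisymmetric under this swap. The outcome is
\begin{equation*}
\frac{d}{dt}\|\Phi\|_{L^p}^p = \frac{pK}{2} \int_{\bbr}\int_{\bbr}\int_0^{g(\Omega)}\int_0^{g(\Omega_*)} \bigl[|\Phi|^{p-1}\mbox{sgn}(\Phi) - |\Phi_*|^{p-1}\mbox{sgn}(\Phi_*)\bigr]\bigl[\sin(\phi_* - \phi) - \sin(\psi_* - \psi)\bigr] d\eta\, d\eta_*\, d\Omega\, d\Omega_*.
\end{equation*}
Applying the sum-to-product identity $\sin A - \sin B = 2\cos\bigl(\tfrac{A+B}{2}\bigr)\sin\bigl(\tfrac{A-B}{2}\bigr)$ with $A = \phi_* - \phi$ and $B = \psi_* - \psi$ rewrites the second bracket as $2\cos\bigl(\tfrac{(\phi_* - \phi) + (\psi_* - \psi)}{2}\bigr)\sin\bigl(\tfrac{\Phi_* - \Phi}{2}\bigr)$, which is exactly the structure that Lemma \ref{lemma-cal} is designed to exploit.

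The decisive geometric input is a uniform lower bound on the cosine factor. For $t \ge t_0$ the $\theta$-supports of both $\mu_t$ and $\nu_t$ have diameter at most $D^{\infty} < \pi/2$, so $|\phi_* - \phi| \le D^{\infty}$ and $|\psi_* - \psi| \le D^{\infty}$, forcing the cosine argument to lie in $[-D^{\infty}, D^{\infty}]$ and hence $\cos(\cdot) \ge \cos D^{\infty} > 0$. Moreover the monotonicity of $x \mapsto |x|^{p-1}\mbox{sgn}(x)$ makes $\bigl[|\Phi|^{p-1}\mbox{sgn}(\Phi) - |\Phi_*|^{p-1}\mbox{sgn}(\Phi_*)\bigr]\sin\bigl(\tfrac{\Phi_* - \Phi}{2}\bigr)$ pointwise nonpositive, so replacing $\cos(\cdot)$ by $\cos D^{\infty}$ preserves the inequality and Lemma \ref{lemma-cal} delivers
\begin{equation*}
\frac{d}{dt}\|\Phi\|_{L^p}^p \le -\frac{2pK\cos D^{\infty}}{\pi}\|\Phi\|_{L^p}^p.
\end{equation*}
Gronwall's inequality, extracting a $p$-th root, and identifying $\|\Phi\|_{L^p}$ with $\widetilde{W}_p(\mu_t, \nu_t)$ then yield the claimed contraction for $1 \le p < \infty$; the case $p = \infty$ is handled by passing $p \to \infty$ in \eqref{limit}.

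The main technical point I expect is verifying the pointwise size bound $|\Phi| < \pi/2$ required to invoke Lemma \ref{lemma-cal}. The first-moment normalization forces both $\theta$-supports to lie in a common interval of length at most $2D^{\infty}$ located near $\pi$, giving $|\Phi| \le 2D^{\infty}$; if $D^{\infty}$ is not already below $\pi/4$, one must either strengthen the trapping argument or choose $t_0$ larger, exploiting the decay from Lemma \ref{lemma-appli-1}. Once this size bound is in place, the remaining computation is the clean bookkeeping sketched above, driven entirely by the sum-to-product identity and Lemma \ref{lemma-cal}.
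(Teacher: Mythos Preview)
Your approach is the paper's: subtract the two pseudo-inverse equations, apply the sum-to-product identity, symmetrize, extract the cosine factor using the $D^\infty$ trapping, and hand the remaining integral to Lemma~\ref{lemma-cal}; then Gronwall and let $p\to\infty$.

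The one place you leave open is the size bound for Lemma~\ref{lemma-cal}, and your proposed fix does not work: in the non-identical case $D^\infty=\arcsin(D_\Omega(\mu_0)/K)$ is a fixed asymptotic floor, so ``choosing $t_0$ larger'' gains nothing. The paper resolves this not by bounding $|\Phi|$ but by bounding the \emph{difference} directly,
\[
|\Phi_*-\Phi|\le|\phi_*-\phi|+|\psi_*-\psi|\le 2D^\infty<\pi,
\]
valid for all $t\ge t_0$ with no restriction on the size of $D^\infty$ beyond $D^\infty<\pi/2$. Inspecting the proof of Lemma~\ref{lemma-cal}, the hypothesis $|\Phi|<\pi/2$ is only used to force every sine argument into $[0,\pi/2]$ so that $\sin x\ge\tfrac{2}{\pi}x$; in each case this argument is either $\tfrac{|\Phi_*-\Phi|}{2}$ or $\tfrac{|\Phi|}{2}$, and the displayed bound together with your own estimate $|\Phi|\le 2D^\infty<\pi$ already covers both. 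So the lemma applies as written in the paper, and no sharpening of the trapping is needed.
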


\begin{proof}
First, we consider the case where $p \in [1, \infty)$. Note that
the Wasserstein distance in one-space dimension is equivalent to
the $L^p$-distance of its corresponding pseudo inverse
distribution function. Remember that we are assuming that the
solutions are smooth, hence it is more convenient to obtain the
$L^p$-estimate from equation \eqref{phi-t}. Denoting by
$\phi_i,~i=1,2$ the pseudo inverse functions associated to $\mu_t$
and $\nu_t$ respectively, we get
\begin{align*}
\partial_t \phi_i &= \Omega + K \int_{\bbr} \int_0^{g(\Omega_*)}  \sin (\phi_{i*} -\phi_i)
d\eta_* d\Omega_* \,,
\end{align*}
for $i=1,2$. Then the above equations imply that
\begin{align}
\begin{aligned} \label{EQ-2}
& \partial_t (\phi_1 - \phi_2) = K \int_{\bbr}
\int_0^{g(\Omega_*)}  \Big( \sin (\phi_{1*} -\phi_1)  - \sin
(\phi_{2*}-\phi_2) \Big)
d\eta_* d\Omega_*, \\
                             & \hspace{0.5cm} = 2K \int_{\bbr} \int_0^{g(\Omega_*)}  \cos\left(\frac{\phi_{1*}-\phi_1}{2}+\frac{\phi_{2*} -\phi_{2}}{2} \right) \sin \left(\frac{\phi_{1*} -\phi_1}{2}-\frac{\phi_{2*}-\phi_2}{2}
                             \right) d\eta_* d\Omega_*.
\end{aligned}
\end{align}
We multiply \eqref{EQ-2} by $p\mbox{sgn}(\phi_1 - \phi_2) |\phi_1 -\phi_2|^{p-1}$ and integrate over $[0, g(\Omega)] \times \bbr$ using the symmetry $(\eta,\Omega) \Longleftrightarrow (\eta_*,\Omega_*)$
to obtain
\begin{eqnarray*}
&& \frac{d}{dt} ||\phi_1 - \phi_2||_{L^p}^p \cr
&& \hspace{0.5cm}= 2pK\int_{\bbr} \int_{\bbr} \int_0^{g(\Omega)} \int_0^{g(\Omega_*)}  \Big[ \cos\left(\frac{\phi_{1*}-\phi_{1}}{2}+\frac{\phi_{2*}-\phi_{2}}{2} \right)
\sin \left(\frac{\phi_{1*}-\phi_{1}}{2}-\frac{\phi_{2*}-\phi_{2}}{2} \right) \cr
&& \hspace{2cm} \times \left[|\phi_{1}-\phi_{2}|^{p-1}\mbox{sgn} \left(\phi_{1}-\phi_{2} \right) -|\phi_{1*}-\phi_{2*}|^{p-1}\mbox{sgn} \left(\phi_{1*}-\phi_{2*} \right)\right] \Big] d\eta_* d\eta d\Omega_* d\Omega.
\end{eqnarray*}
It follows from the proof of Lemma \ref{lemma-cal} that for all
$a,~b \in \mathbb{R}$,
\begin{equation*}
\big( |a|^{p-1}\mbox{sgn} (a) - |b|^{p-1}\mbox{sgn}(b) \big) \sin \left(\frac{b -a}{2}\right) \leq 0.
\end{equation*}
On the other hand, Lemma \ref{lemma-appli-1} implies that there
exists $t_0$ such that
\[
D_{\theta}({\mu_t}) \le D^{\infty},\quad D_{\theta}({\nu_t}) \le D^{\infty}, \quad t \geq t_0,
\]
and we use Lemma \ref{lemma-appli-2} to obtain
\begin{eqnarray*}
&&\max_{\Omega\in \mbox{spt}(g)}\phi_1(g(\Omega),\Omega,t)-\min_{\Omega\in
\mbox{spt}(g)}\phi_1(0,\Omega,t)\le D^{\infty},\\
&&\max_{\Omega\in \mbox{spt}(g)}\phi_2(g(\Omega),\Omega,t)-\min_{\Omega\in
\mbox{spt}(g)}\phi_2
(0,\Omega,t)\le D^{\infty} ,\quad t\ge t_0.
\end{eqnarray*}
Then, this yields
\[
0 < \cos D^{\infty} \leq  \cos \left( \frac{\phi_{1*}-
\phi_{1}}{2}+\frac{\phi_{2*}- \phi_{2}}{2}
\right) . \] Hence, we obtain
\begin{equation*}
\frac{d}{dt} ||\phi_1 - \phi_2||_{L^p}^p \leq 2pK \cos D^\infty \mathcal{J}
\end{equation*}
where
\begin{eqnarray*}
&& \mathcal{J}:= \int_{\bbr \times \bbr} \int_0^{g(\Omega)} \int_0^{g(\Omega_*)} \sin \left(\frac{\phi_{1*}-\phi_{2*}}{2}-\frac{\phi_{1}-\phi_{2}}{2} \right)\\
&& \hspace{1cm} \times \left[|\phi_{1}-\phi_{2}|^{p-1}\mbox{sgn} \left(\phi_{1}-\phi_{2} \right)-|\phi_{1*}-\phi_{2*}|^{p-1}\mbox{sgn} \left(\phi_{1*}-\phi_{2*} \right)\right] d\eta_* d\eta d\Omega_* d\Omega.
\end{eqnarray*}
If we set $\Phi := \phi_{1}-\phi_{2}$, then
\[
|\Phi_* - \Phi|\le
|\phi_{1*}-\phi_{1}| +
|\phi_{2*}-\phi_{2}| \le 2 D^{\infty} < \pi,\quad
t>t_0.
\]
Since $\mu_0$, $\nu_0$ have the same center of mass, it follows
from Lemma \ref{contract-lemma1} that
\[
\int_{\bbr} \int_0^{g(\Omega)} \Phi d\eta d\Omega= 0,\quad t>t_0.
\]
Thus, we can apply Lemma \ref{lemma-cal} with
$\Phi = \phi_{1}-\phi_{2}$ to obtain
\[
\frac{d}{dt} \widetilde{W}_p^p(\mu_t,\nu_t) \leq -\frac{2pK\cos
D^\infty }{\pi} \widetilde{W}_p^p(\mu_t,\nu_t), \quad t \geq t_0.
\]
This yields
\begin{equation}\label{lp-stability-est}
\widetilde{W}_p(\mu_t,\nu_t) \leq \exp \Big( -\frac{2K \cos
D^\infty}{\pi} (t - t_0) \Big)
\widetilde{W}_p(\mu_{t_0},\nu_{t_0}).
\end{equation} In the case of $p=\infty$, we use \eqref{limit} and
\eqref{lp-stability-est} to obtain
\[
\widetilde{W}_{\infty}(\mu_t,\nu_t) \leq \exp \Big( -\frac{2K \cos
D^\infty}{\pi} (t - t_0) \Big)
\widetilde{W}_{\infty}(\mu_{t_0},\nu_{t_0}).
\]
This completes the proof for smooth solutions. As mentioned above
a simple approximation argument as in Subsection 3.3 finishes the
proof for measure valued solutions.
\end{proof}

\begin{remark}
The assumption in Theorem \ref{thm-wp} on the initial measures to
have equal mean in $\theta$ is not restricted. Due to Lemma
\ref{contract-lemma1}, the mean in $\theta$ is preserved in time.
Thus, we can always restrict to the equal mean in $\theta$ case by
translational invariance of \eqref{k-ku}.
\end{remark}

%

\end{document}